\newtheorem{theorem}{Theorem}
\newtheorem{lemma}{Lemma}
\newtheorem{remark}{Remark}
\newtheorem{proposition}{Proposition}
\newtheorem{conjecture}{Conjecture}
\renewcommand*\backref[1]{}
\renewcommand*\backrefalt[4]{ \ifcase #1 \or (cited on page #2) \else (cited on pages #2) \fi}
\newcommand{\be}{\begin{equation}}
\newcommand{\ee}{\end{equation}}
\newcommand{\bea}{\begin{eqnarray}}
\newcommand{\eea}{\end{eqnarray}}
\newcommand{\vs}{\vspace{0.5cm}}
\newcommand{\vsv}{\vspace{0.12cm}}
\def\XXint#1#2#3{{\setbox0=\hbox{$#1{#2#3}{\int}$ }
\vcenter{\hbox{$#2#3$ }}\kern-.6\wd0}}
\begin{document}

\title[Strominger connection and pluriclosed metrics]{Strominger connection and pluriclosed metrics}
	
\author{Quanting Zhao}
\address{Quanting Zhao. School of Mathematics and Statistics \&
Hubei Key Laboratory of Mathematical Sciences, Central China Normal
University, Wuhan, 430079, P.R.China.} \email{zhaoquanting@126.com;zhaoquanting@mail.ccnu.edu.cn}
\thanks{Zhao is partially supported by National Natural Science Foundations of China with the grant No.11801205.
Zheng is partially supported by National Natural Science Foundations of China
with the grant No.12071050 and 12141101, Chongqing grant cstc2021ycjh-bgzxm0139, and is supported by the 111 Project D21024.}

\author{Fangyang Zheng}
\address{Fangyang Zheng. School of Mathematical Sciences, Chongqing Normal University, Chongqing 401331, China}
\email{20190045@cqnu.edu.cn;franciszheng@yahoo.com} \thanks{}

\subjclass[2010]{53C55 (primary), 53C05 (secondary)}
\keywords{K\"ahler-like; Strominger connection; Bismut connection; Chern connection; Riemannian connection; pluriclosed metric; balanced metric; Vaisman surface}

\begin{abstract}
In this paper, we prove a conjecture raised by Angella, Otal, Ugarte, and Villacampa recently, which states that if the Strominger connection (also known as Bismut connection) of a compact Hermitian manifold is K\"ahler-like, in the sense that its curvature tensor obeys all the symmetries of the curvature of a K\"ahler manifold, then the metric must be pluriclosed. What we actually showed is a bit more: for any given Hermitian manifold, the Strominger K\"ahler-like condition  is equivalent to the pluriclosedness of the metric plus the parallelness of the torsion.
\end{abstract}

\maketitle

\tableofcontents

\markleft{Quanting Zhao and Fangyang Zheng}
\markright{Strominger K\"ahler-like}

\section{Introduction}

Given a Hermitian manifold $(M^n,g)$, the {\em Strominger connection} $\nabla^s$ (also known as the {\em Bismut connection}) is the unique connection on $M$ that is Hermitian (namely, $\nabla^sg=0$, $\nabla^s J =0$) and has totally skew-symmetric torsion tensor. Its explicit expression appeared in Strominger's paper \cite{Strominger} in 1986, where he called it the H-connection, and independently in Bismut's paper \cite{Bismut} where he established the existence and uniqueness as well as using it in his study of local index theorems. We note that the connection has appeared implicitly earlier in math literature (see for instance \cite{Yano}) and was also used by physicists earlier (see for example \cite{CHSW}, \cite{GatesHR} and \cite{Hull}). So one could argue for calling it {\em Bismut-Hull-Strominger connection,} which is too long. In some literature it was also called the {\em KT connection} (K\"ahler with torsion). Since the need of non-K\"ahler Calabi-Yau spaces in string theory, more specifically in Hull-Strominger system in type II string theory and in 2-dimensional supersymmetric
$\sigma$-models, this particular connection has been drawing more and more attention from geometers and mathematical physicists alike. We refer the readers to the papers \cite{FY}, \cite{Fu}, \cite{Fu-Li-Yau}, \cite{Fu-Yau}, \cite{Li-Yau}, \cite{Tseng-Yau} for study involving Hull-Strominger system, which uses Bismut-Strominger connection in an essential way, and to the papers \cite{S18},  \cite{ST10}, \cite{ST13} where the study of Hermitian curvature flow involving the connection were initiated which had many follow-ups in recent years.


A Hermitian metric $g$ is called {\em pluriclosed} if $\partial \overline{\partial }\omega =0$, where $\omega$ is the K\"ahler form of $g$. This type of metrics is also called {\em Strong KT metric} (or {\em SKT metric}) in many literature (see for example the nice survey paper by Fino and Tomassini \cite{FinoTomassini}). It is an obvious generalization of the K\"ahlerness condition (which is $d\omega =0$). The pluriclosed metrics have been studied by many authors. For this and related topics in non-K\"ahler geometry such as Calabi-Yau problem, vanishing theorems, Gauduchon connections, balanced metrics, K\"ahler-like conditions, etc., we refer the readers to  \cite{AI}, \cite{EFV}, \cite{FV},    \cite{Fu-Zhou},   \cite{Gauduchon1}, \cite{IvanovP}, \cite{KYZ},  \cite{Liu-Yang}, \cite{Liu-Yang1}, \cite{Liu-Yang2},    \cite{STW},  \cite{Tosatti},   \cite{VYZ}, \cite{YZ1}, \cite{Zheng1} and the references therein for more discussions and backgrounds.


For a connection $D$ on $(M^n,g)$, its curvature tensor $R^D$ is given by
\begin{equation} \label{eq:def-curvature}
 R^D(X,Y,Z,W) = \langle D_XD_YZ - D_YD_XZ- D_{[X,Y]}Z, \, W \rangle ,
 \end{equation}
where $g(\, , \, ) = \langle \, , \, \rangle$ and $X$, $Y$, $Z$, $W$ are tangent vectors in $M^n$. $R^D$ is  skew-symmetric with respect to its first two positions by definition, and it will be skew-symmetric with respect to its last two positions if $Dg=0$,  namely if $D$ is a metric connection.


A metric connection $D$ on $(M^n,g)$ is called {\em K\"ahler-like} if its curvature $R^D$ satisfies the symmetry conditions
\begin{eqnarray*}
&& R^D(X,Y,Z,W) + R^D(Y,Z,X, W) + R^D(Z, X,Y,W) = 0, \\
&& R^D(X,Y,JZ,JW)= R^D(X,Y,Z,W) = R^D(JX,JY,Z,W)
\end{eqnarray*}
for any tangent vectors $X$, $Y$, $Z$, $W$ in $M^n$. The first line is the {\em first Bianchi identity}, and the second line is called the {\em type condition}. Note that the first equality in the type condition is always satisfied when $DJ=0$.

If we extend $R^D$ linearly over ${\mathbb C}$, and use the decomposition $TM\otimes {\mathbb C} = T^{1,0}M \oplus T^{0,1}M$, where $T^{1,0}M$ consists of all vector fields of the form $x= X-\sqrt{-1}JX$ where $X$ is real, then the above definition is equivalent to
\begin{eqnarray*}
&& R^D(x,\overline{y}, z, \overline{w}) = R^D(z,\overline{y}, x, \overline{w}), \\
&& R^D(x, y, \ast , \ast )= R^D(\ast , \ast , z, w) = 0
\end{eqnarray*}
for any type $(1,0)$ complex tangent vectors $x$, $y$, $z$, and $w$. In other words, in complex terms, the only possibly non-trivial components of $R^D$ are $R^D(x, \overline{y}, z, \overline{w})$,  and $R^D$ is symmetric when the first and the third positions are swapped\footnote{Note that this swap is different than requiring $R^D(X,Y,Z,W)=R^D(Z,Y,X,W)$ for any real vectors, which is a strictly stronger condition. In an earlier version of our manuscript, we mistakenly thought that they are the same and stated the definition of K\"ahler-likeness incorrectly (even though we used the correct definition in mind so none of the later discussions were affected). This was kindly pointed out to us by Fino and Tardini. See \cite{FinoTardini} for more details.}.

This notion was introduced in \cite{YZ} in 2018 for Levi-Civita (namely, Riemannian) and Chern connections, following the pioneer work of Gray \cite{Gray} and others. In \cite{AOUV}, Angella, Otal, Ugarte and Villacampa generalized it to any metric connection, and they particularly studied it for the Strominger connection $\nabla^s$ and the one-parameter family of canonical connections called the {\em Gauduchon connections} which we will  denote as $\nabla^{(t)} = (1-\frac{t}{2})\nabla^c + \frac{t}{2}\nabla^s$, where $t\in {\mathbb R}$ and $\nabla^c$ is the Chern connection.


Through a detailed study on all nilmanifolds and Calabi-Yau type solvmanifolds of dimension three, they classified all those spaces which are $\nabla^{(t)}$-K\"ahler-like, and they proposed the following conjecture:

\begin{conjecture}[AOUV\cite{AOUV}]
For a compact Hermitian manifold $(M^n,g)$, if the Strominger connection is K\"ahler-like, then $g$ must be pluriclosed.
\end{conjecture}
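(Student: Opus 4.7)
My plan is to extract two complementary consequences from the Strominger K\"ahler-like (SKL) hypothesis --- the Bianchi half and the type-condition half --- and combine them to force both parallel torsion and pluriclosedness. (I will aim for the stronger equivalence stated in the abstract; the conjecture falls out as a byproduct, with no compactness needed.)

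The first step converts the Bianchi half into a useful identity. Since $\nabla^{s}$ has totally skew torsion, the first Bianchi identity for a connection with torsion reads
\begin{equation*}
\mathfrak{S}_{X,Y,Z}R^{s}(X,Y,Z,W) \;=\; \mathfrak{S}_{X,Y,Z}\bigl[(\nabla^{s}_{X}H)(Y,Z,W)+H(T^{s}(X,Y),Z,W)\bigr],
\end{equation*}
where $H(X,Y,Z):=g(T^{s}(X,Y),Z)$ is the torsion $3$-form. Recall $H=d^{c}\omega=i(\bar\partial-\partial)\omega$, so $dH=2i\,\partial\bar\partial\omega$ and pluriclosedness is exactly $dH=0$. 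SKL kills the left-hand side, producing the key identity
\begin{equation*}
(\star)\qquad \mathfrak{S}_{X,Y,Z}(\nabla^{s}_{X}H)(Y,Z,W) \;=\; -\,\mathfrak{S}_{X,Y,Z}H(T^{s}(X,Y),Z,W).
\end{equation*}

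Next I would feed the type half of SKL through the Strominger--Chern comparison. Writing $\nabla^{s}=\nabla^{c}+\mathcal{A}$, where the tensor $\mathcal{A}$ is computable pointwise from $T^{s}$, a direct calculation gives
\begin{equation*}
R^{s}(X,Y)=R^{c}(X,Y)+(\nabla^{c}_{X}\mathcal{A})(Y)-(\nabla^{c}_{Y}\mathcal{A})(X)+\mathcal{A}_{T^{c}(X,Y)}+[\mathcal{A}_{X},\mathcal{A}_{Y}].
\end{equation*}
Because the Chern curvature $R^{c}$ is automatically of type $(1,1)$ in its first two arguments, the SKL type condition $R^{s}(JX,JY,Z,W)=R^{s}(X,Y,Z,W)$ --- i.e.\ the vanishing of $R^{s}(X,Y,\cdot,\cdot)$ whenever $X,Y$ are both of type $(1,0)$ --- translates into a first-order PDE on the torsion, schematically of the form \emph{``the $(1,0)$-covariant derivative of the $(2,0)$-part of $T^{s}$ is pointwise quadratic in $T^{s}$.''}

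With these two inputs in hand, I would decompose $(\star)$ into pure bidegrees in $(X,Y,Z,W)$ using a local unitary $(1,0)$-frame. Since $H\in\Omega^{2,1}\oplus\Omega^{1,2}$, both sides of $(\star)$ split cleanly into $(3,1)$, $(2,2)$, $(1,3)$ pieces. Combining these bidegree identities with the PDE constraints from the type condition, the expectation is that each piece collapses separately, yielding
\begin{enumerate}
\item $\nabla^{s}H=0$, i.e.\ the torsion is parallel, and
\item the $(2,2)$-part of $dH$ vanishes, i.e.\ $\partial\bar\partial\omega=0$.
\end{enumerate}
Conclusion (ii) is the pluriclosedness of the conjecture; (i)$+$(ii) is the stronger equivalence in the abstract. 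The whole argument is local. For the converse, assuming $\nabla^{s}H=0$ and $dH=0$, both sides of $(\star)$ vanish directly --- the LHS since $\nabla^{s}H=0$, and the RHS using $dH=0$ together with the formula expressing $dH$ as an alternating sum of $\nabla^{s}H$ and quadratic torsion terms --- and the type condition on $R^{s}$ follows from parallelism of $T^{s}$ by a short tensor calculation.

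The main obstacle will be the bookkeeping in the last step: extracting, from the cyclic sum $(\star)$ together with the type-condition PDEs, the precise pointwise identities that separately force (i) and (ii). The first-order term $\nabla^{s}H$ and the quadratic term $H(T^{s}(X,Y),Z,W)$ are tangled, and disentangling them requires testing $(\star)$ against vectors of complementary bidegrees in a carefully adapted unitary frame --- this is where the heart of the proof lives.
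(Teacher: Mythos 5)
Your setup is sound and in fact mirrors the first stage of the paper's own argument: decomposing the torsion-twisted first Bianchi identity and the type condition into bidegrees, in a unitary frame with $\theta^s(p)=0$, yields precisely that the $(1,0)$-covariant derivative of the Chern torsion vanishes, $T^j_{ik,\ell}=0$, while the $(0,1)$-derivative equals a specific quadratic expression in the torsion, $T^j_{ik,\overline{\ell}}=-\tfrac{2}{3}P^{j\ell}_{\,ik}$ (the paper's Lemmas \ref{lemma5} and \ref{lemma6}). One further computes $\sqrt{-1}\,\partial\overline{\partial}\omega=-\tfrac13\sum P^{j\ell}_{\,ik}\,\varphi_i\varphi_k\overline{\varphi}_j\overline{\varphi}_{\ell}$, so under the K\"ahler-like hypothesis both pluriclosedness and $\nabla^sT^c=0$ reduce to the single statement $P=0$. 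Up to this point your plan and the paper agree.

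The gap is the sentence ``the expectation is that each piece collapses separately.'' The bidegree decomposition does not collapse: it terminates at $T^j_{ik,\overline{\ell}}=-\tfrac23 P^{j\ell}_{\,ik}$, an identity equating a first derivative of the torsion with a quadratic in the torsion, and gives no reason for either side to vanish. Proving $P=0$ is the entire technical content of the paper's Theorem \ref{BK_pll}, and it requires a second-order argument absent from your proposal: one must differentiate these identities again, invoke the commutation formula coming from $R^s_{\overline{k}\,\overline{\ell}\,*\,*}=0$, first establish that $|\eta|^2$ is constant and that the Gauduchon torsion $1$-form $\eta$ is $\nabla^s$-parallel through a chain of trace identities among the quadratic tensors $A$, $B$, $C$, $\phi$, and only then bootstrap to a sum-of-squares identity $|T^j_{ik,\overline{\ell}}|^2+\overline{T^j_{ik}}\,T^j_{ik,\overline{\ell}\ell}=0$ whose second term is shown to vanish. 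The paper explicitly notes that the Bochner-type shortcut available in the Strominger-flat case breaks down here. As written, your argument establishes at most the equivalence ``pluriclosed $\Leftrightarrow\nabla^sT^c=0$'' under the K\"ahler-like assumption, but neither statement itself, and therefore does not yet prove the conjecture.
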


In \cite{AOUV}, the authors proved the above conjecture under the assumption that $\nabla^s$ is flat (i.e., the curvature of $\nabla^s$ vanishes), using the classification result of \cite{WYZ} which says that all Strominger flat (which was called Bismut flat in that paper) manifolds are covered by Samelson spaces \cite{S}.

As proved in \cite{AOUV}, there are examples of compact Hermitian manifolds which are Strominger K\"ahler-like, but not Strominger flat. The simplest such example is a primary Kodaira surface.

The main purpose of the present paper is to give an affirmative answer to the above conjecture, and it turns out that the result is actually true even without the compactness assumption. That is, we have the following

\begin{theorem}\label{thm1}
Let $(M^n,g)$ be a Hermitian manifold. Its Strominger connection $\nabla^s$ is K\"ahler-like if and only if $\nabla^s$ has parallel torsion and $g$ is pluriclosed.
\end{theorem}

The main technical part of the proof is to show that if the Strominger connection $\nabla^s$ is K\"ahler-like, then its torsion is parallel. When the Strominger connection is flat, the authors of \cite{WYZ} used a Bochner identity trick analogous to the famous work of Boothby \cite{Boothby} who classified all compact Chern flat manifolds, to conclude that the Chern torsion $T^c$ is $\nabla^s$-parallel (see also \cite{Wang} for a more general discussion of complex parallelizable manifolds).


When $\nabla^s$ is K\"ahler-like but not flat, this argument no longer works, and one has to rely on the deep algebraic tanglement of the torsion and its covariant derivatives to show its parallelness. The strategy is to walk in two steps, first to focus on the Gauduchon torsion $1$-form $\eta$, which is the trace of the full torsion tensor $T^c$, and show that $\eta$ is $\nabla^s$-parallel. Then in the second step, we use the parallelness of $\eta$ to further analyze $T^c$ and establish its parallelness.


In complex dimension $2$, $T^c$ and $\eta$ carry the same amount of information, and the situation becomes particularly simple. In this case, the Strominger K\"ahler-like condition is actually equivalent to $\nabla^sT^c=0$, and it implies that the metric is pluriclosed. Furthermore, the Strominger K\"ahler-like condition is also equivalent to a known condition called {\em Vaisman}.


Recall that a Hermitian manifold $(M^n,g)$ is said to be {\em locally conformally K\"ahler}, if there exists a closed $1$-form $\psi$ on $M^n$ such that $d\omega =\omega \wedge \psi$. This $\psi$ is called the {\em Lee form} of $(M^n,g)$. The Hermitian manifold is said to be {\em Vaisman,} if it is locally conformally K\"ahler and its Lee form is parallel under the Levi-Civita connection. Note that for a Hermitian surface $(M^2,g)$, one always has $d\omega =-2\omega \wedge (\eta +\overline{\eta})$, where $\eta$ is Gauduchon's torsion $1$-form. So $(M^2,g)$ is Vaisman if and only if $\eta +\overline{\eta}$ is parallel under the Levi-Civita connection.   We prove that for $n=2$, the Strominger K\"ahler-like condition is equivalent to the Vaisman condition:

\begin{theorem}\label{thm2}
Let $(M^2,g)$ be a Hermitian surface. Then the following are equivalent:
\begin{enumerate}
\item The Strominger connection is K\"ahler-like.
\item $\nabla^s T^c=0$.
\item $g$ is Vaisman, namely, its Lee form is parallel under the Levi-Civita connection.
\end{enumerate}
\end{theorem}

Now let $(M^2,g)$ be a compact Hermitian surface which is Strominger K\"ahler-like, or equivalently, Vaisman. When $b_1(M)$ is odd, Belgun  gave in his beautiful work \cite{Belgun} a complete classification of all such metrics. In particular, $M^2$ is either a properly elliptic surface, a Kodaira surface \cite{Kodaira}, or an elliptic or Class 1 Hopf surface. When $b_1(M)$ is even, the surface admits K\"ahler metrics, which will force $g$ to be K\"ahler, so there is no non-K\"ahler Vaisman metric on such surfaces. This phenomenon persists in higher dimensions as well. To state the result, let us first recall the following interesting conjecture of Fino and Vezzoni in non-K\"ahler geometry:

\begin{conjecture}[Fino-Vezzoni \cite{FV2}]
If a compact complex manifold $M^n$ admits a pluriclosed metric $g$ and a balanced metric $h$, then it must admit a K\"ahler metric.
\end{conjecture}

Our main result says that Strominger K\"ahler-like metrics are always pluriclosed. So the following result can be regarded as a partial evidence to the above conjecture:

\begin{theorem}\label{thm3}
Let $(M^n,g)$ be a compact Hermitian manifold that is Strominger K\"ahler-like. If $g$ is not K\"ahler, then $M^n$ does not admit any balanced metric.
\end{theorem}

In other words, given any compact Strominger K\"ahler-like manifold $(M^n,g)$, if $M^n$ admits a balanced metric $g_0$, then $g$ must be K\"ahler.  We will actually prove a slightly stronger statement: if $g$ is not K\"ahler, such $M^n$ does not admit any metric that is {\em strongly Gauduchon} in the sense of Popovici \cite{Popovici}, which is a weaker condition than balanced (see Theorem \ref{thm4}  in \S 5). On the other hand, we will show as Proposition \ref{prop3} in \S 5 that Strominger K\"ahler-like metrics are always {\em Gauduchon,} namely, $\partial \overline{\partial} (\omega^{n-1})=0$. In fact, for any $1\leq k\leq n-1$, they are always {\em $k$-Gauduchon} in the sense of Fu-Wang-Wu \cite{FuWangWu}: $\partial \overline{\partial} (\omega^{k}) \wedge \omega^{n-k-1}=0$. We will also show that for any compact, non-K\"ahler, Strominger K\"ahler-like manifold $M$, the Dolbeault cohomology group $H^{0,1}_{\overline{\partial}}(M)$ must be non-trivial (see Proposition \ref{prop4} in \S 5).

It seems that compact Strominger K\"ahler-like manifolds form a rather restrictive class, and particularly so in low dimensions. After the completion of the present paper, we were able to obtain two results: one is the explicit description of Strominger K\"ahler-like manifolds amongst all complex nilmanifolds with nilpotent complex structures in all dimensions \cite{ZZ1}, the other is the classification of  compact Strominger K\"ahler-like manifolds in complex dimension $3$ \cite{YZZ}.

The paper is organized as follows: In Section \ref{plr}, we  collect some preliminary results and fix the notations. In Section \ref{BKmetric}, we examine the basic properties for Strominger K\"ahler-like metrics, and give proofs to Theorem \ref{thm1} and \ref{thm2} assuming the main technical result Proposition \ref{prop2}. In Section \ref{pll_tor}, we show the parallelness of the torsion tensor and establish the proof of Proposition \ref{prop2}. In the last section,  we will prove Theorem \ref{thm3} and its slight generalization Theorem \ref{thm4} (about the non-existence of strongly Gauduchon metrics), and we also observe some properties for Strominger K\"ahler-like metrics and prove Propositions \ref{prop3} and \ref{prop4}.

\vs

\section{Preliminaries}\label{plr}

In this section, we collect some known results for our later use and also fix the notations. It is included here to make the paper self-contained, for the convenience of the readers, since the proof of the main theorem is computational in nature. We refer the readers to \cite{YZ} and \cite{WYZ} for more details, and to \cite{Zheng} as a more general reference.


Let $(M^n,g)$ be a Hermitian manifold, where $n\geq 2$. We will
denote by $\nabla$, $\nabla^c$, and $\nabla^s$ respectively the Levi-Civita (we will also call it {\em Riemannian} for convenience), Chern, and Strominger (or Bismut) connection of the metric $g$, and by $R$, $R^c$, and $R^s$  their curvatures, called
the Riemannian, Chern, or Strominger curvature tensor, respectively.


We will denote by  $T^{1,0}M$ the bundle of complex tangent vector fields of type $(1,0)$, namely,
complex vector fields of the form  $v- \sqrt{-1}Jv$, where $v$ is a real vector field on $M$. Let $\{ e_1, \ldots , e_n\}$ be a local frame of $T^{1,0}M$ in a neighborhood in $M$. Write $e=\ ^t\!(e_1, \ldots , e_n) $
 as a column vector. Denote by $\varphi = \ ^t\!(\varphi_1, \ldots ,
  \varphi_n)$ the column vector of local $(1,0)$-forms which is the
coframe dual to $e$. For the Chern connection $\nabla^c$ of $g$,
let us denote by $\theta$,  $\Theta$ the matrices of connection and
curvature, respectively, and by $\tau$ the column vector of the
torsion $2$-forms, all under the local frame $e$. Then the structure
equations and Bianchi identities are
\begin{eqnarray*}
d \varphi & = & - \ ^t\!\theta \wedge \varphi + \tau,  \label{formula 1}\\
d  \theta & = & \theta \wedge \theta + \Theta. \\
d \tau & = & - \ ^t\!\theta \wedge \tau + \ ^t\!\Theta \wedge \varphi, \label{formula 3} \\
d  \Theta & = & \theta \wedge \Theta - \Theta \wedge \theta.
\end{eqnarray*}
The entries of $\Theta$ are all $(1,1)$ forms, while the entries of the column vector $\tau $ are all $(2,0)$ forms, under any frame $e$.


Write  $\langle \ , \rangle $ for the (real) inner product given by the Hermitian metric $g$, and extend it bilinearly over ${\mathbb C}$. Under the frame $e$, let us denote the components of the Riemannian connection $\nabla$ by
$$ \nabla e = \theta_1 e + \overline{\theta_2 }\overline{e} ,
\ \ \ \nabla \overline{e} = \theta_2 e + \overline{\theta_1
}\overline{e} ,$$ then the matrices of connection and curvature for
$\nabla $ become:
$$ \hat{\theta } = \left[ \begin{array}{ll} \theta_1 & \overline{\theta_2 } \\ \theta_2 & \overline{\theta_1 }  \end{array} \right] , \ \  \  \hat{\Theta } = \left[ \begin{array}{ll} \Theta_1 & \overline{\Theta}_2  \\ \Theta_2 & \overline{\Theta}_1   \end{array} \right], $$
 where
\begin{eqnarray*}
\Theta_1 & = & d\theta_1 -\theta_1 \wedge \theta_1 -\overline{\theta_2} \wedge \theta_2, \\
\Theta_2 & = & d\theta_2 - \theta_2 \wedge \theta_1 - \overline{\theta_1 } \wedge \theta_2,  \label{formula 7}\\
d\varphi & = & - \ ^t\! \theta_1 \wedge \varphi - \ ^t\! \theta_2
\wedge \overline{\varphi } .
\end{eqnarray*}
Also, for the Strominger connection $\nabla^s$, we will write
\begin{equation*}
\nabla^s e = \theta^s e, \ \ \ \Theta^s=d\,\theta^s - \theta^s \wedge \theta^s,
\end{equation*}
for the matrices of connection and curvature under the frame $e$. When $e$ is unitary,
both $\theta_2 $ and $\Theta_2$ are skew-symmetric, while $\theta$, $\theta_1$, $\theta^s$,
 or $\Theta$,  $\Theta_1$, $\Theta^s$ are all skew-Hermitian.


Following \cite{YZ}, we will introduce a $(2,1)$ tensor $\gamma $ by letting its components under the frame $e$ be the matrix of $1$-forms (which we will denote by the same letter for convenience)
\begin{equation*}
\gamma = \theta_1 - \theta ,
\end{equation*}
and denote by $\gamma = \gamma ' + \gamma ''$  the decomposition
of $\gamma$ into $(1,0)$ and $(0,1)$ parts. By \cite[Lemma 2]{WYZ}, we have
\begin{equation*}
\theta^s = \theta + 2\gamma = \theta_1 +\gamma ,
\end{equation*}
and more generally,  consider the line of canonical connections on $(M^n,g)$: the {\em $t$-Gauduchon connection} $\nabla^{(t)} = (1-\frac{t}{2})\nabla^c + \frac{t}{2}\nabla^s$ where $t\in {\mathbb R}$,  whose matrix of connection under the frame $e$ is given by $\theta^{(t)}=\theta + t \gamma$.

Next let us denote by $T_{ij}^k=-T_{ji}^k$ the components of $\tau$:
\begin{equation*}
\tau_k = \sum_{i,j=1}^n T_{ij}^k \varphi_i\wedge \varphi_j \ = \ 2\!\sum_{1\leq i<j\leq n}  T_{ij}^k \varphi_i\wedge \varphi_j.
\end{equation*}
Note that our $T_{ij}^k$ is only half of the components of the torsion $\tau$ used in some other literature where the second sigma term is used. Also, if we denote by $T^c$ the torsion tensor of the Chern connection, namely,
$$ T^c(X,Y) = \nabla^c_XY - \nabla^c_YX - [X,Y], $$
then we have
$$ T^c(e_i, \overline{e}_j)=0, \ \ \ T^c(e_i, e_j) = 2 \sum_{k=1}^n T_{ij}^k e_k. $$
As observed in \cite{YZ},  when $e$ is unitary, $\gamma $ and $\theta_2$ take the following simple forms:
\begin{equation*}
(\theta_2)_{ij} = \sum_{k=1}^n \overline{T^k_{ij}} \varphi_k, \ \ \ \ \gamma_{ij} = \sum_{k=1}^n ( T_{ik}^j \varphi_k - \overline{T^i_{jk}} \overline{\varphi}_k ).
\end{equation*}
So the torsion tensor $T^{(t)}$ for the $t$-Gauduchon connection has components:
\begin{equation*}
T^{(t)}(e_i, e_j)  = (2-2t) \sum_{k=1}^n T^k_{ij} e_k, \ \ \ \ T^{(t)}(e_i, \overline{e}_j)  = t\sum_{k=1}^n ( \overline{T_{kj}^i} e_k - T^j_{ki} \overline{e}_k ).
\end{equation*}
In particular, for $t=2$, one can check that
$$ \langle T^s(X,Y), Z\rangle = - \langle T^s(X,Z), Y\rangle $$
for any tangent vector $X$, $Y$, $Z$. So $\nabla^s = \nabla^{(2)}$ is indeed the Hermitian connection with totally skew-symmetric torsion, namely, the Strominger connection. Also, we see that the Chern torsion components $T_{ij}^k$ contain all the torsion information for any $\nabla^{(t)}$. As a consequence, we have
$$ \nabla^s T^s =0 \ \ \Longleftrightarrow \ \ \nabla^sT^c = 0 \ \ \Longleftrightarrow \ \ \nabla^s T^{(t)}=0 $$
for any $t\in {\mathbb R}$. On the other hand, given any $t\neq t'$, one has that $\nabla^{(t)}T^c = 0 $ is not equivalent to   $ \nabla^{(t')}T^c=0$  in general. So for these Gauduchon connections, when we say {\em parallel torsion} it is important to specify which connection makes the torsion parallel.


Next, let us recall Gauduchon's {\em torsion $1$-form} $\eta$ which is
defined to be the trace of $\gamma'$ (\cite{Gauduchon}). Under any
frame $e$, it has the expression:
\begin{equation*}
\eta = \mbox{tr}(\gamma') = \sum_{i,j=1}^n T^i_{ij}\varphi_j = \sum_j \eta_j \varphi_j.
\end{equation*}
Denote by $\omega =\sqrt{-1} \sum_{i,j} g_{i\overline{j}} \varphi_i \wedge \overline{\varphi}_j$ the K\"ahler form of $g$, where $g_{i\overline{j}} = \langle e_i, \overline{e}_j \rangle $. By a direct computation, one gets
\begin{equation}
\partial \,\omega^{n-1} = -2 \ \eta \wedge \omega^{n-1}.
\label{eq:domega}
\end{equation}
Recall that the metric $g$ is said to be {\em balanced} if
$\omega^{n-1}$ is closed. The above identity shows that $g$ is balanced
if and only if $\eta =0$. When $n=2$, $\eta =0$ means $\tau =0$, so
balanced complex surfaces are K\"ahler. But in dimension $n\geq 3$, $\eta $
contains less information than $\tau$. By the structure equations and the first Bianchi identity, one gets the following

\begin{lemma}\label{lemma1}
Under any unitary frame $e$, it holds that
\begin{equation*}
\sqrt{-1} \partial \overline{\partial} \,\omega = \, ^t\!\tau  \overline{\tau} + \, ^t\!\varphi \Theta \overline{\varphi}.
\end{equation*}
\end{lemma}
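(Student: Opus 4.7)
The plan is to compute $\sqrt{-1}\,\partial\overline{\partial}\omega$ by direct differentiation of $\omega$ in a unitary frame, exploiting the two skew-Hermitian identities $\theta + \,^t\!\overline{\theta} = 0$ and $\,^t\!\overline{\Theta} = -\Theta$ that hold in such a frame to eliminate every connection contribution and leave exactly the torsion and curvature terms on the right-hand side.

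First, starting from $\omega = \sqrt{-1}\,^t\!\varphi \wedge \overline{\varphi}$ together with the first structure equation $d\varphi = -\,^t\!\theta \wedge \varphi + \tau$ and its conjugate, I would expand $d\omega = \sqrt{-1}(d\,^t\!\varphi \wedge \overline{\varphi} - \,^t\!\varphi \wedge d\overline{\varphi})$. Transposing the structure equation and paying attention to the anti-commutation of $1$-forms gives $d\,^t\!\varphi = \,^t\!\varphi\wedge\theta + \,^t\!\tau$. After substitution the connection pieces assemble into $\,^t\!\varphi \wedge (\theta + \,^t\!\overline{\theta}) \wedge \overline{\varphi}$ and disappear, leaving
\begin{equation*}
d\omega \;=\; \sqrt{-1}\bigl(\,^t\!\tau \wedge \overline{\varphi} \;-\; \,^t\!\varphi \wedge \overline{\tau}\bigr).
\end{equation*}
By bidegree, $\partial\omega = \sqrt{-1}\,^t\!\tau\wedge\overline{\varphi}$ and $\overline{\partial}\omega = -\sqrt{-1}\,^t\!\varphi\wedge\overline{\tau}$, so $\sqrt{-1}\,\partial\overline{\partial}\omega = \partial(\,^t\!\varphi\wedge\overline{\tau})$.

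Next, I would apply $d$ to the $(1,2)$-form $\,^t\!\varphi\wedge\overline{\tau}$, using Leibniz (with the sign $(-1)^1$ when $d$ moves past $\,^t\!\varphi$) and substituting $d\,^t\!\varphi = \,^t\!\varphi\wedge\theta + \,^t\!\tau$ together with the conjugate Bianchi identity $d\overline{\tau} = -\,^t\!\overline{\theta}\wedge\overline{\tau} + \,^t\!\overline{\Theta}\wedge\overline{\varphi}$ obtained from the first Bianchi identity for $\tau$. The resulting expression organizes itself as
\begin{equation*}
d(\,^t\!\varphi\wedge\overline{\tau}) \;=\; \,^t\!\varphi\wedge(\theta + \,^t\!\overline{\theta})\wedge\overline{\tau} \;+\; \,^t\!\tau\wedge\overline{\tau} \;-\; \,^t\!\varphi\wedge\,^t\!\overline{\Theta}\wedge\overline{\varphi}.
\end{equation*}
The first summand vanishes by skew-Hermiticity of $\theta$, and $\,^t\!\overline{\Theta}=-\Theta$ converts the last into $+\,^t\!\varphi\wedge\Theta\wedge\overline{\varphi}$. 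Both surviving terms are of pure type $(2,2)$, so $\partial(\,^t\!\varphi\wedge\overline{\tau})$ equals the full $d(\,^t\!\varphi\wedge\overline{\tau})$, yielding the claimed identity
\begin{equation*}
\sqrt{-1}\,\partial\overline{\partial}\omega \;=\; \,^t\!\tau\wedge\overline{\tau} \;+\; \,^t\!\varphi\wedge\Theta\wedge\overline{\varphi}.
\end{equation*}

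The main obstacle is not conceptual but a matter of clean book-keeping: tracking Koszul signs when $d$ passes through the $2$-form $\overline{\tau}$, the sign flip induced by transposing wedge products of matrix-valued forms, and keeping in mind that both skew-Hermitian cancellations depend on the frame being unitary. Once these are handled, the two cancellations together deliver the formula essentially for free.
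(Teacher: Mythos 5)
Your computation is correct, and it is exactly the argument the paper has in mind: the lemma is stated there as a direct consequence of the structure equations and the first Bianchi identity, with the skew-Hermitian cancellations of $\theta$ and $\Theta$ in a unitary frame doing the work, just as you carry out. The bookkeeping of signs (the Leibniz sign past $\,^t\!\varphi$, the transpose of the structure equation, and $\,^t\overline{\Theta}=-\Theta$) all checks out.
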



Next let us consider the curvature tensors. Let $D$ be a linear connection on $M^n$. Its curvature $R^D$ is defined by (\ref{eq:def-curvature}). We will also write it as $R^D_{XYZW}$ for convenience. It is always skew-symmetric with respect to the first two positions, and also skew-symmetric with respect to its last two positions if the connection is {\em metric,} namely, $Dg=0$. When the connection $D$ is {\em Hermitian,} namely, satisfies $Dg=0$ and $DJ=0$, where $J$ is the almost complex structure, then $R^D$ satisfies
$$ R^D(X,Y,JZ,JW) = R^D(X,Y,Z,W) $$
for any tangent vectors $X$, $Y$, $Z$, $W$. Under a type $(1,0)$ frame $e$, the components of the Chern, Strominger, and  Riemannian curvature tensors are
given by
\begin{eqnarray*}
R^c_{i\overline{j}k\overline{\ell}} & = & \sum_{p=1}^n \Theta_{kp}(e_i,
\overline{e}_j)g_{p\overline{\ell}}, \\
R^s_{abk\overline{\ell}} & = & \sum_{p=1}^n \Theta^s_{kp}(e_a,
e_b)g_{p\overline{\ell}}, \\
R_{abcd} & = & \sum_{f=1}^{2n}
\hat{\Theta}_{cf}(e_a,e_b)g_{fd},  \label{formula 16}
\end{eqnarray*}
where $a, \ldots , d, f$ are between $1$ and $2n$, with
$e_{n+i}=\overline{e}_i$. Note that for any Hermitian connection $D$ we have $R^D_{abij} = R^D_{ab\bar{i}\bar{j}} =0$ by the discussion above. For the Riemannian connection $\nabla$, which does not make $J$ parallel in general, $R_{abij}$ may not vanish in general. By
$g_{ij}=g_{\bar{i}\bar{j}}=0$, we get
\begin{eqnarray*}
R_{i\bar{j}k\bar{\ell}} & = & \sum_{p=1}^n
(\Theta_1^{1,1})_{kp}(e_i, \overline{e}_j)g_{p\overline{\ell}}, \ \
R_{ij\bar{k}\bar{\ell}} \ = \  \sum_{p=1}^n
(\Theta_2^{2,0})_{\overline{k}p}(e_i, e_j)  g_{p\overline{\ell}},
\label{formula 17}\\
R_{i\bar{j} \bar{k}\bar{\ell}} & = & R_{\bar{k}\bar{\ell}
i \overline{j} } \ = \ \sum_{p=1}^n (\Theta_2^{1,1})_{\overline{k}p}
(e_i, \overline{e}_j) g_{p\overline{\ell}} \ =
\ \sum_{p=1}^n (\Theta_1^{0,2})_{ip}(\overline{e}_k, \overline{e}_{\ell } )g_{p\overline{j}}  \label{formula 18}, \\
R_{\bar{i}\bar{j}\bar{k}\bar{\ell}} & = & R_{ijk\ell}
\ = \ 0.  \label{formula 19}
\end{eqnarray*}
The last line is because $\Theta_2^{0,2}=0$ by \cite[Lemma 1]{YZ}, a property for general Hermitian metric discovered by Gray in
\cite[Theorem 3.1 on page 603]{Gray}. Note that here we adopted the usual notation for curvature tensor, unlike in \cite{YZ} or \cite{WYZ}, where the first two and last two positions were swapped. As in \cite{WYZ}, the starting point of our computation is the following lemma from \cite[Lemma 7]{YZ}, again note that we have swapped the first two and last two positions for the curvature tensors.

\begin{lemma}  \label{lemma2}
Let $(M^n,g)$ be a Hermitian manifold. Let $e$
be a unitary frame in $M$, then
\begin{eqnarray*}
2T^j_{ik,\overline{\ell}} & = &
R^c_{k\overline{\ell}i\bar{j}} - R^c_{i\bar{\ell}k\bar{j}}
\label{formula 21},\\
R_{k\bar{\ell}ij} \ & = & T^{\ell}_{ij,k} + T^{\ell}_{ri} T^r_{jk} -
T^{\ell}_{rj} T^r_{ik}  \label{formula 22},\\
R_{\bar{j}\bar{\ell}ik} & = &
T^{\ell}_{ik,\bar{j}} - T^j_{ik,\bar{\ell}} +
2T^r_{ik} \overline{T^r_{j\ell}} + T^j_{ri} \overline{ T^k_{r\ell} } +
T^{\ell}_{rk} \overline{ T^i_{rj} } - T^{\ell}_{ri} \overline{ T^k_{rj} } -
T^j_{rk} \overline{ T^i_{r\ell} }   \label{formula 23},\\
R_{k\bar{\ell}i\bar{j}} & = &
R^c_{k\bar{\ell}i\bar{j}} - T^j_{ik,\bar{\ell}} -
\overline{ T^i_{j\ell,\bar{k}} } + T^r_{ik} \overline{ T^r_{j\ell} }
- T^j_{rk} \overline{ T^i_{r\ell} } - T^{\ell}_{ri} \overline{ T^k_{rj} },
\label{formula 24}
\end{eqnarray*}
where the index $r$ is summed over $1$ through $n$, and the index after the comma stands for covariant derivative with respect to the Chern connection $\nabla^c$.
\end{lemma}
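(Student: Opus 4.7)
The plan is to derive all four identities from the structure equations and Bianchi identities listed just above, together with the explicit expressions (under a unitary frame) for $\gamma$ and $\theta_2$ in terms of the Chern torsion components $T^k_{ij}$. The key input is the relation $\theta_1 = \theta + \gamma$, which lets us transfer everything to the Chern side where the computations are cleanest, and the formulas
\[
(\theta_2)_{ij} = \sum_k \overline{T^k_{ij}}\,\varphi_k, \qquad \gamma_{ij} = \sum_k \bigl(T^j_{ik}\varphi_k - \overline{T^i_{jk}}\,\overline\varphi_k\bigr).
\]

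First I would establish identity \eqref{formula 21} by taking the $(1,2)$-component (one barred, two unbarred $\varphi$'s) of the Chern torsion Bianchi identity \eqref{formula 3}, namely $d\tau = -\,^t\!\theta\wedge\tau + \,^t\!\Theta\wedge\varphi$. Under a unitary frame, $\theta$ carries only $(1,0)$ forms of type ``holomorphic,'' so the only contributions to $d\tau$ of type $(1,2)$ come from the $(0,1)$ differential of $T^k_{ij}$ (which by definition of the Chern covariant derivative equals $T^k_{ij,\bar\ell}$ up to the coframe $\overline\varphi_\ell$) and from the $(1,1)$-part $\Theta$ of the Chern curvature. Reading off coefficients yields \eqref{formula 21}. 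The factor of $2$ arises because $\tau_k = \sum_{i,j}T^k_{ij}\varphi_i\wedge\varphi_j$ uses the symmetric summation convention.

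Next, for \eqref{formula 22}--\eqref{formula 24} I would compute $\Theta_1$ and $\Theta_2^{2,0}$ explicitly from
\[
\Theta_1 = d\theta_1 - \theta_1\wedge\theta_1 - \overline{\theta_2}\wedge\theta_2, \qquad \Theta_2 = d\theta_2 - \theta_2\wedge\theta_1 - \overline{\theta_1}\wedge\theta_2,
\]
substituting $\theta_1 = \theta + \gamma$ and then separating by bidegree. The $(2,0)$-part of $\Theta_1$ produces \eqref{formula 22}: the $d\gamma'$ piece contributes the covariant derivative $T^\ell_{ij,k}$, while the wedge products $\gamma'\wedge\gamma'$ and the mixed terms $\theta\wedge\gamma'+\gamma'\wedge\theta$ (which recombine into a covariant derivative plus quadratic torsion) produce the $T^\ell_{ri}T^r_{jk} - T^\ell_{rj}T^r_{ik}$ term; the pure $\theta\wedge\theta$ piece is absorbed into the Chern curvature $\Theta$, whose $(2,0)$-part is zero. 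The $(1,1)$-part of $\Theta_1$ yields \eqref{formula 24}: the leading term is $R^c_{k\bar\ell i\bar j}$, while $d\gamma$ evaluated on $(e_i,\overline e_j)$ gives $-T^j_{ik,\bar\ell} - \overline{T^i_{j\ell,\bar k}}$, and the wedge products $\gamma'\wedge\gamma''$ and $\overline{\theta_2}\wedge\theta_2$ combine to give the three quadratic torsion terms. Finally, \eqref{formula 23} follows from the $(1,1)$-part of $\Theta_2$ (equivalently, the $(0,2)$-part of $\Theta_1$), where $d\theta_2$ contributes the two covariant derivatives $T^\ell_{ik,\bar j} - T^j_{ik,\bar\ell}$ and the remaining wedge products provide the six quadratic torsion terms, half of which carry the coefficient $+1$ after symmetrization accounting for the $+2T^r_{ik}\overline{T^r_{j\ell}}$ term.

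The main obstacle is the combinatorial bookkeeping: the quadratic terms $T\bar T$ in the products $\overline{\theta_2}\wedge\theta_2$, $\theta\wedge\gamma$, and $\gamma\wedge\gamma$ each produce several index patterns that must be carefully tracked to distinguish, for instance, $T^j_{rk}\overline{T^i_{r\ell}}$ from $T^\ell_{ri}\overline{T^k_{rj}}$, and to absorb the ``connection times torsion'' pieces correctly into Chern covariant derivatives as opposed to leaving them as ordinary derivatives. Once one fixes a bookkeeping convention (e.g. always expand $\theta_1\wedge\theta_1 = \theta\wedge\theta + \theta\wedge\gamma + \gamma\wedge\theta + \gamma\wedge\gamma$ and replace the first three terms using that $\theta$ is the Chern connection form), the identities fall out term by term by matching bidegree and frame-vector coefficients.
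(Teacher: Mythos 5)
Your overall strategy is the right one, and it is essentially the derivation behind the cited source: the paper itself does not prove this lemma but imports it from \cite[Lemma 7]{YZ} (with the index pairs swapped), where it is obtained exactly as you describe, by substituting $\theta_1=\theta+\gamma$ and the torsion expressions for $\gamma$ and $\theta_2$ into the structure equations and reading off bidegree components of $\Theta_1$, $\Theta_2$ and of the Chern Bianchi identity. However, two concrete steps in your sketch are wrong as stated and would derail the computation if carried out literally.

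First, under a unitary frame the Chern connection matrix $\theta$ is \emph{not} of type $(1,0)$: it is skew-Hermitian, $\theta_{ij}=-\overline{\theta_{ji}}$, so its $(1,0)$- and $(0,1)$-parts determine one another and vanish only simultaneously. Hence $-\,^t\!\theta\wedge\tau$ does contribute to the part of $d\tau$ with one barred and two unbarred $\varphi$'s (which is of type $(2,1)$, not $(1,2)$), and it is precisely the term $-\,^t\!\theta^{0,1}\wedge\tau$ that upgrades $\overline{\partial}T^k_{ij}$ to the covariant derivative $T^k_{ij,\overline{\ell}}$. Either keep that term explicitly, or verify the identity at a point $p$ after choosing the frame so that $\theta(p)=0$ (the Chern analogue of Lemma \ref{lemma3}); as written, your justification of \eqref{formula 21} rests on a false premise, even though the factor of $2$ and the final matching are correct. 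Second, you attribute \eqref{formula 23} to the $(1,1)$-part of $\Theta_2$, ``equivalently the $(0,2)$-part of $\Theta_1$.'' Neither is correct: by \eqref{formula 18} those two components both encode $R_{i\overline{j}\,\overline{k}\overline{\ell}}$, the component with three barred indices, which does not occur in the lemma. The component $R_{\overline{j}\overline{\ell}ik}=R_{ik\overline{j}\overline{\ell}}$ of \eqref{formula 23} comes from the $(2,0)$-part of $\Theta_2$, i.e.\ the second identity in \eqref{formula 17}; expanding $\Theta_2^{1,1}$ would compute a different curvature component and never produce the stated formula. With these two corrections the plan goes through, and the rest is indeed the bookkeeping you describe.
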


Finally, by the same proof of \cite[Lemma 4]{YZ}, we have the following

\begin{lemma}\label{lemma3}
Let $(M^n,g)$ be a Hermitian manifold. For any $p\in M$, there exists a unitary frame $e$ of type $(1,0)$ tangent vectors in a neighborhood of $p$, such that the connection matrix $\theta^s(p)=0$.
\end{lemma}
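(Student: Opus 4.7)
The plan is to start from an arbitrary unitary $(1,0)$-frame $\tilde{e}$ of $T^{1,0}M$ in a neighborhood of $p$ (obtained, e.g., by Gram--Schmidt from a holomorphic frame), and then apply a unitary change of frame $e = U\tilde{e}$ designed to kill $\theta^s$ at $p$. A direct computation from $\nabla^s e = \theta^s e$ combined with $e = U\tilde{e}$ yields the transformation law
$$\theta^s \;=\; U\,\tilde{\theta}^s\,U^{-1} \,+\, dU\cdot U^{-1}.$$
Imposing $U(p) = I$ reduces the task to producing a $U(n)$-valued map $U$ near $p$ with $U(p) = I$ and $dU(p) = -\tilde{\theta}^s(p)$. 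Since $\tilde{e}$ is unitary and $\nabla^s g = 0$, the matrix $\tilde{\theta}^s$ is skew-Hermitian, so $-\tilde{\theta}^s(p)$ is a matrix of $1$-forms with values in $\mathfrak{u}(n)$---exactly the Lie-algebraic compatibility needed for such a $U$ to exist.

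To construct $U$ explicitly, I would pick real coordinates $(x^1,\dots,x^{2n})$ centered at $p$, write $\tilde{\theta}^s(p) = \sum_{\alpha} A_\alpha\, dx^\alpha|_{p}$ with each $A_\alpha \in \mathfrak{u}(n)$, and then set
$$U(x) \;:=\; \exp\!\Bigl(-\sum_{\alpha=1}^{2n} A_\alpha\, x^\alpha\Bigr).$$
The exponent is skew-Hermitian for every real $x$, so $U(x) \in U(n)$ throughout a neighborhood of $p$. Moreover $U(p) = I$ and $dU(p) = -\tilde{\theta}^s(p)$ by inspection, so substituting into the transformation law gives $\theta^s(p) = 0$. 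The new frame $e = U\tilde{e}$ remains unitary since $U$ is unitary-valued: $\langle e_i,\overline{e}_j\rangle = (UU^\ast)_{ij} = \delta_{ij}$.

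There is essentially no genuine obstacle here: the whole argument is a pointwise manifestation of the surjectivity of $\exp\colon \mathfrak{u}(n)\to U(n)$ near the identity, coupled with the skew-Hermiticity of $\tilde{\theta}^s$ that comes for free from $\nabla^s g = 0$. The only care required is in sign/convention bookkeeping---verifying the transformation formula under the column-vector convention $\nabla^s e = \theta^s e$ used in the paper, and treating both $dU(p)$ and $\tilde{\theta}^s(p)$ as matrices of $\R$-valued $1$-forms (rather than splitting them into $(1,0)$ and $(0,1)$ parts), so that the single identity $dU(p) = -\tilde{\theta}^s(p)$ simultaneously kills both types of components of $\theta^s$ at $p$. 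This is precisely the argument used for the Chern connection in Lemma~4 of \cite{YZ}, and nothing in it is sensitive to which Hermitian connection we are trivializing.
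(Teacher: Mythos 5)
Your argument is correct and is essentially the paper's own: the paper proves this lemma simply by invoking ``the same proof of \cite[Lemma 4]{YZ}'', which is exactly the unitary gauge transformation $e=U\tilde{e}$ with $U(p)=I$ and $dU(p)=-\tilde{\theta}^s(p)$, available because $\tilde{\theta}^s$ is $\mathfrak{u}(n)$-valued under a unitary frame for any metric connection. Your transformation law, the exponential construction of $U$, and the remark that nothing is specific to the Strominger connection all check out.
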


In other words, one can always choose a local unitary frame such that the connection matrix vanishes at a given point. Of course the same property holds for any Hermitian connection $D$ on $M$, not just the Chern or Strominger connection.

\vs

\section{Strominger K\"ahler-like metrics}\label{BKmetric}

Now let us recall the notion of {\em K\"ahler-like} in describing a metric connection $D$ on a Hermitian manifold $(M^n,g)$:

\vsv
\vsv

\noindent {\bf Definition \cite{AOUV}:} {\em Let $(M^n,g)$ be a Hermitian manifold and $D$ a metric connection on $M$, that is, $Dg=0$. We say that $D$ is {\bf \em K\"ahler-like,} if its curvature tensor $R^D$ obeys the symmetries:
\begin{eqnarray*}
&& R^D(X,Y,Z,W) + R^D(Y,Z,X, W) + R^D(Z, X,Y,W) = 0, \\
&& R^D(X,Y,JZ,JW)= R^D(X,Y,Z,W) = R^D(JX,JY,Z,W)
\end{eqnarray*}
for any tangent vectors $X$, $Y$, $Z$, $W$ in $M^n$.}

\vsv
\vsv

Note that when $D$ is Hermitian, the first equality in the second line above always holds. Also, in terms of complex components, the K\"ahler-like condition simply means that the only possibly non-zero components of $R^D$ are $R^D(x,\overline{y},z,\overline{w})$ where $x$, $y$, $z$, $w$ are type $(1,0)$ complex tangent vectors, and $x$ and $z$ can be interchanged, i.e., $R^D(z,\overline{y},x,\overline{w})= R^D(x,\overline{y},z,\overline{w})$ always holds.

\vsv

As mentioned in the introduction section, this notion was introduced in \cite{YZ} for the Riemannian and Chern connections, following the pioneer works of Gray \cite{Gray} and others. It was generalized to any metric connection by Angella, Otal, Ugarte, and Villacampa in \cite{AOUV}. To prove the AOUV Conjecture (Conjecture 1 in \cite{AOUV}), namely, to show that if the Strominger connection is K\"ahler-like, then the Hermitian metric must be pluriclosed, let us take a closer look at the Strominger K\"ahler-like condition. We begin with the following

\begin{lemma}\label{lemma4}
Let $(M^n,g)$ be a Hermitian manifold, $n\geq 2$. The Strominger connection $\nabla^s$ is K\"ahler-like if and only if
\begin{equation*}
 \ ^t\!\varphi \wedge \Theta^s =0
 \end{equation*}
under any unitary frame $e$.
\end{lemma}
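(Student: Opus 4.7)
The plan is to expand the $n$-vector of 3-forms $\ ^t\!\varphi\wedge\Theta^s$ by complex bidegree and match the vanishing of each bidegree piece with the curvature symmetry it encodes. Since $\nabla^s$ is Hermitian, $R^s$ is skew in its first two positions and its last two positions, and $\nabla^s J=0$ forces $R^s_{ABij}=R^s_{AB\bar i\bar j}=0$; the only possibly non-vanishing components of each entry $\Theta^s_{k\ell}$ are therefore its $(2,0)$, $(1,1)$, and $(0,2)$ parts, encoding respectively $R^s_{ijk\bar\ell}$, $R^s_{i\bar jk\bar\ell}$, and $R^s_{\bar i\bar jk\bar\ell}$. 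Since $\varphi_i$ is of type $(1,0)$, the entry $\sum_i\varphi_i\wedge\Theta^s_{ij}$ of $\ ^t\!\varphi\wedge\Theta^s$ has components only in bidegrees $(3,0)$, $(2,1)$, and $(1,2)$.

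First I would carry out the straightforward bookkeeping that identifies the coefficient of each basis $3$-form. Using the antisymmetry of $R^s$ in its first two positions, one finds that the $(1,2)$-piece vanishes iff $R^s_{\bar i\bar jk\bar\ell}=0$ for all indices; the $(2,1)$-piece vanishes iff $R^s_{i\bar jk\bar\ell}=R^s_{k\bar ji\bar\ell}$; and the $(3,0)$-piece vanishes iff the cyclic sum $R^s_{abc\bar j}+R^s_{bca\bar j}+R^s_{cab\bar j}$ is zero. The key observation is then that the vanishings of the $(2,0)$ and $(0,2)$ parts of $\Theta^s$ are equivalent: conjugating $R^s_{\bar i\bar jk\bar\ell}=0$ (using that $R^s$ is a real tensor) yields $R^s_{ij\bar k\ell}=0$, and the skew-symmetry of $R^s$ in its last two positions rewrites this as $R^s_{ijk\bar\ell}=0$.

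With these ingredients both directions follow quickly. If $\nabla^s$ is K\"ahler-like, the type condition forces $R^s_{ijk\bar\ell}=R^s_{\bar i\bar jk\bar\ell}=0$, while the first Bianchi identity combined with these vanishings and the Hermitian properties reduces to $R^s_{i\bar jk\bar\ell}=R^s_{k\bar ji\bar\ell}$, so all three bidegree pieces of $\ ^t\!\varphi\wedge\Theta^s$ vanish. Conversely, if $\ ^t\!\varphi\wedge\Theta^s=0$, then the $(1,2)$-piece gives the $(0,2)$-vanishing of $\Theta^s$, the conjugation-antisymmetry step upgrades it to the $(2,0)$-vanishing (so the type condition holds), the $(2,1)$-piece supplies the required $(i,k)$-symmetry of the $(1,1)$-part, and the $(3,0)$-cyclic-sum condition becomes automatic once the $(2,0)$-part vanishes; together with the Hermitian properties these give K\"ahler-likeness. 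The main technical step calling for care is the index bookkeeping in the bidegree expansion, particularly the $(3,0)$-piece, where after applying the antisymmetries the coefficient of a basis $3$-form must reduce precisely to the claimed cyclic sum.
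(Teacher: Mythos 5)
Your proposal is correct and follows essentially the same route as the paper: decompose $\,^t\!\varphi\wedge\Theta^s$ by bidegree, identify the $(1,2)$-piece with the vanishing of $(\Theta^s)^{0,2}$, use the skew-Hermitian (equivalently, reality plus metric-skewness) property to get $(\Theta^s)^{2,0}=0$, and read the required symmetry $R^s_{i\bar jk\bar\ell}=R^s_{k\bar ji\bar\ell}$ off the $(2,1)$-piece. The paper's proof is just a terser version of yours (it leaves the $(3,0)$-piece and the "walk backwards" direction implicit), so no further comment is needed.
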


\begin{proof} Note that the above equation  implies that the $(0,2)$ part of $\Theta^s$ is zero, so the $(2,0)$ part is also zero since $\Theta^s$ is skew-Hermitian. For the $(1,1)$ part, write
$$\Theta^s_{k\ell} = \sum_{i,j=1}^n R^s_{i\overline{j}k\overline{\ell}} \varphi_i \wedge \overline{\varphi}_j, $$
we see that the equation in the lemma means that $R^s$ is symmetric with respect to its first and third position. This means that $\nabla^s$ is K\"ahler-like. The converse is also true since one can walk backwards.
\end{proof}

Modifying the results in \cite{WYZ} for the Strominger flat case, we have the following:

\begin{lemma}\label{lemma5}
If a Hermitian manifold $(M^n,g)$ has K\"ahler-like Strominger connection, then under a local unitary frame $e$, the Chern torsion components satisfy
\begin{eqnarray}
T_{ik,\ell }^j - T_{i\ell ,k}^j & = &  2 \sum_{r} (  \ T_{ik}^r T_{r\ell }^j + T_{\ell i}^r T_{rk}^j   +  T_{k\ell }^r T_{ri}^j ) \ \ = \ \ 0, \label{eq:21} \\
T^j_{ik ,\overline{\ell}} + \overline{ T^i_{j\ell ,\overline{k}} } - \overline{ T^{k}_{j\ell ,\overline{i}} } & = & - 2 \sum_r \big( T^r_{ik} \overline{T^r_{j\ell }} + T^j_{ir} \overline{T^k_{\ell r}} + T^{\ell}_{kr} \overline{T^i_{jr }} - T^{\ell}_{ir} \overline{T^k_{jr }} - T^j_{kr} \overline{T^i_{\ell r}} \big), \label{eq:22}
\end{eqnarray}
where the indices after comma means covariant derivatives with respect to $\nabla^s$.
\end{lemma}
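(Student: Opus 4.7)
The plan is to translate the Strominger K\"ahler-like condition, reformulated by Lemma \ref{lemma4} as ${}^t\!\varphi\wedge\Theta^s=0$ in any unitary frame, into polynomial identities in the Chern torsion and its covariant derivatives. Using $\theta^s=\theta+2\gamma$ I would expand
\[
\Theta^s \;=\; \Theta \;+\; 2\bigl(d\gamma-\theta\wedge\gamma-\gamma\wedge\theta\bigr)\;-\;4\,\gamma\wedge\gamma.
\]
Because $\Theta$ is of pure type $(1,1)$, only $d\gamma$ and the quadratic terms in $\gamma$ contribute to the $(2,0)$ and $(0,2)$ parts of $\Theta^s$; substituting the explicit formula $\gamma_{ij}=\sum_k\bigl(T_{ik}^j\varphi_k-\overline{T_{jk}^i}\,\overline\varphi_k\bigr)$ turns these parts into expressions in the $T_{ij}^k$ and their derivatives. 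At a chosen point $p$ I would use Lemma \ref{lemma3} to pick a unitary frame with $\theta^s(p)=0$, so that covariant differentiation with respect to $\nabla^s$ reduces to ordinary differentiation of components at $p$ and the comma derivatives appearing in (\ref{eq:21}) and (\ref{eq:22}) show up directly; note that then $\theta(p)=-2\gamma(p)$, so the $\theta\wedge\gamma$ terms are not lost but contribute extra quadratic-in-$T$ pieces.

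The proof then splits by type. Decomposing $\sum_i\varphi_i\wedge\Theta^s_{ij}=0$ into its $(3,0)$, $(2,1)$ and $(1,2)$ components forces (respectively) $\Theta^{s,(2,0)}_{ij}=0$, the swap-symmetry of $R^s_{i\overline j k\overline\ell}$ in the first and third positions, and $\Theta^{s,(0,2)}_{ij}=0$, as recorded in the proof of Lemma \ref{lemma4}. Reading off the coefficient of $\varphi_\ell\wedge\varphi_k$ in $\Theta^{s,(2,0)}_{ij}(p)=0$ yields, after the dust settles, the first equality of (\ref{eq:21}); reading off the coefficient of $\overline\varphi_k\wedge\overline\varphi_\ell$ in $\Theta^{s,(0,2)}_{ij}(p)=0$ yields (\ref{eq:22}) after conjugating and relabeling indices. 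The exact coefficients $2$ and $-2$ on the right-hand sides emerge as the combined contribution of the $d\gamma$, $\theta\wedge\gamma$ and $\gamma\wedge\gamma$ terms evaluated at the Strominger-flat point.

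To close (\ref{eq:21}) I still need $S^j_{ik\ell}:=\sum_r\bigl(T^r_{ik}T^j_{r\ell}+T^r_{\ell i}T^j_{rk}+T^r_{k\ell}T^j_{ri}\bigr)=0$. For this I would invoke the first Bianchi identity for any metric connection with torsion,
\[
\sum_{\mathrm{cyc}(X,Y,Z)}R^s(X,Y)Z \;=\; \sum_{\mathrm{cyc}}(\nabla^s_XT^s)(Y,Z)+\sum_{\mathrm{cyc}}T^s\bigl(T^s(X,Y),Z\bigr),
\]
whose left-hand side vanishes under K\"ahler-likeness. Applied to three $(1,0)$ vectors this gives $T^j_{k\ell,i}+T^j_{\ell i,k}+T^j_{ik,\ell}=2S^j_{ik\ell}$. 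Since $S^j_{ik\ell}$ is cyclic-symmetric in $(i,k,\ell)$, adding the three cyclic shifts of the first equality of (\ref{eq:21}) and using the antisymmetry $T^j_{ab,c}=-T^j_{ba,c}$ rewrites the summed left-hand side as $2\bigl(T^j_{ik,\ell}+T^j_{k\ell,i}+T^j_{\ell i,k}\bigr)=4S^j_{ik\ell}$, whereas the three right-hand sides add to $6S^j_{ik\ell}$; this forces $S^j_{ik\ell}=0$ and hence $T^j_{ik,\ell}=T^j_{i\ell,k}$. The main difficulty is purely combinatorial: the $d\gamma(p)$, $\gamma\wedge\gamma(p)$ and $\theta\wedge\gamma(p)$ expansions contain many index patterns and signs coming from the wedge product and from the antisymmetry of $T^k_{ij}$ in its lower indices, together with the conjugate contributions from the $\overline{T^i_{jk}}\,\overline\varphi_k$ piece of $\gamma$; the conceptual content is modest once Lemma \ref{lemma4} is in hand.
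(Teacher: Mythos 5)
Your setup --- the expansion of $\Theta^s$ via $\theta^s=\theta+2\gamma$, the choice of a unitary frame with $\theta^s(p)=0$ so that $\theta(p)=-2\gamma(p)$ and $\Theta^s-\Theta=2d\gamma+4\gamma\wedge\gamma$ at $p$, and the type decomposition of ${}^t\!\varphi\wedge\Theta^s=0$ --- matches the paper, and your derivation of the first equality of (\ref{eq:21}) from $(\Theta^s)^{2,0}=0$ is exactly the paper's. Your way of closing (\ref{eq:21}), via the first Bianchi identity with torsion for $\nabla^s$ applied to three $(1,0)$ vectors and the $4S=6S$ bookkeeping, is a legitimate variant of the paper's route (which takes the $(3,0)$ part of the Chern Bianchi identity $d\tau=-{}^t\theta\wedge\tau+{}^t\Theta\wedge\varphi$, where the curvature term drops out because $\Theta$ is of type $(1,1)$); it goes through provided you verify the factor $2$ in the identity ``cyclic sum of $\nabla^sT^s$ equals $2S$''.

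The genuine gap is in (\ref{eq:22}). You propose to read it off from $(\Theta^s)^{0,2}=0$, but that component carries no new information: since $\Theta^s$ is skew-Hermitian one has $(\Theta^s)^{0,2}=-\overline{{}^t\!\bigl((\Theta^s)^{2,0}\bigr)}$, and a direct computation (using $\gamma''_{ij}=-\sum_k\overline{T^i_{jk}}\,\overline\varphi_k$ and the fact that $d\varphi$ has no $(0,2)$ part) shows that $(\Theta^s)^{0,2}$ involves only $\overline{T^j_{ik,\ell}}$ and products $\overline{T}\,\overline{T}$ --- it is precisely the conjugate of the first equality of (\ref{eq:21}). It cannot produce the mixed-type derivatives $T^j_{ik,\overline\ell}$ and the mixed products $T\overline{T}$ that appear in (\ref{eq:22}); those live in the $(1,1)$ component of the curvature. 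The paper obtains (\ref{eq:22}) from the $(2,1)$ part of the Chern Bianchi identity, $\overline\partial\tau+2\overline{\gamma'}\tau={}^t\Theta\,\varphi$, combined with $\Theta=(\Theta^s)^{1,1}-2\Phi$ where $\Phi=(d\gamma+2\gamma\wedge\gamma)^{1,1}$, and with the vanishing of ${}^t\varphi\,(\Theta^s)^{1,1}$ --- that is, exactly the swap-symmetry of $R^s_{i\overline jk\overline\ell}$ which you correctly extract from the $(2,1)$ component of ${}^t\!\varphi\wedge\Theta^s=0$ but then never use. (Alternatively, you could apply your torsion Bianchi identity to a mixed triple such as $(e_i,e_k,\overline e_\ell)$.) As written, your argument does not establish (\ref{eq:22}).
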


\begin{proof}
Fix any $p\in M$ and we want to verify the above identities at $p$. Since both sides are tensors, we may assume without loss of generality that the unitary frame $e$ has vanishing $\theta^s$ at $p$. Since $\gamma'_{ij} = \sum_k T^j_{ik}\varphi_k$, we have $\ ^t\!\gamma' \varphi = - \tau$. So at $p$ it holds that
\begin{equation}\label{eq:structure}
\partial \varphi = - \tau , \ \ \overline{\partial} \varphi =-2\overline{\gamma'}\varphi, \ \ \Theta^s -\Theta = 2d\gamma + 4 \gamma\wedge \gamma.
\end{equation}
Now $0=(\Theta^s)^{2,0} = 2\partial \gamma' + 4\gamma' \gamma'$ lead to the first equality in \eqref{eq:21}. Next, the first Bianchi identity says that $d\tau = -\, ^t\!\theta \tau + \,^t\!\Theta \varphi$. So at $p$ we have $\theta = - 2\gamma$. By taking the $(3,0)$-part of the Bianchi identity, we get $\partial \tau = 2 \, ^t\!\gamma '\tau $, which leads to the second equality in \eqref{eq:21} if $n\geq 3$. Note that when $n=2$, this equality is automatically true, as $i$, $k$, $\ell$ cannot be all distinct.

To prove \eqref{eq:22}, let us write
$$ \Phi = (d\gamma +2\gamma \gamma )^{1,1} = \overline{\partial }\gamma' - \partial\,\overline{^t\!\gamma'}-2\gamma'\, \overline{^t\!\gamma'} - 2\,\overline{^t\!\gamma'} \,\gamma' .$$
At $p$, the Bianchi identity gives
$$\overline{\partial }\tau + 2\overline{\gamma'}\tau = \,^t\!\Theta \varphi = - 2\,^t\!\Phi \varphi,$$
which leads to the equality \eqref{eq:22}. This completes the proof of  Lemma \ref{lemma5}.
\end{proof}

\begin{lemma}\label{lemma6}
If a Hermitian manifold $(M^n,g)$ has K\"ahler-like Strominger connection, then under a local unitary frame $e$, the Chern torsion components satisfy
\begin{eqnarray*}
T_{ik,\ell }^j  & = & 0 \\
0 \ \ \ & = & \sum_{r} (  \ T_{ik}^r T_{r\ell }^j + T_{\ell i}^r T_{rk}^j   +  T_{k\ell }^r T_{ri}^j )  \label{eq:25} \\
T^j_{ik ,\overline{\ell }} & = & - T^{\ell}_{ik ,\overline{j } } \ = \ \overline{  T^i_{j\ell  ,\overline{k }} } \\ \nonumber
& = & -\frac{2}{3}  \sum_r \big( T^r_{ik} \overline{T^r_{j\ell }} + T^j_{ir} \overline{T^k_{\ell r}} + T^{\ell}_{kr} \overline{T^i_{jr }} - T^{\ell}_{ir} \overline{T^k_{jr }} - T^j_{kr} \overline{T^i_{\ell r}} \big)
\end{eqnarray*}
for any $i$, $j$, $k$, $\ell$, where the indices after comma means covariant derivatives with respect to $\nabla^s$.
\end{lemma}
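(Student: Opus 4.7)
The plan is to derive all the additional symmetries in Lemma~\ref{lemma6} directly from Lemma~\ref{lemma5} by pure index manipulation: swapping indices, exploiting the antisymmetry $T^a_{bc}=-T^a_{cb}$ of the Chern torsion, and comparing permuted copies of the same identity. No further geometric or analytic input should be needed.

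For $T^j_{ik,\ell}=0$, note that the first equality in \eqref{eq:21} makes $T^j_{ik,\ell}$ symmetric under $k\leftrightarrow\ell$, while the Chern torsion itself is antisymmetric under $i\leftrightarrow k$. Composing these two swaps cyclically through the three indices $(i,k,\ell)$,
\[
T^j_{ik,\ell}=T^j_{i\ell,k}=-T^j_{\ell i,k}=-T^j_{\ell k,i}=T^j_{k\ell,i}=T^j_{ki,\ell}=-T^j_{ik,\ell},
\]
one returns with a minus sign, forcing the quantity to vanish. The quadratic identity $\sum_r\bigl(T^r_{ik}T^j_{r\ell}+T^r_{\ell i}T^j_{rk}+T^r_{k\ell}T^j_{ri}\bigr)=0$ is already the second equality in \eqref{eq:21} and requires no further work.

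For the mixed-derivative part, write $Q(i,j,k,\ell)$ for the right-hand side of \eqref{eq:22}. The whole argument will rest on two combinatorial checks on $Q$: (a) $Q(i,\ell,k,j)=-Q(i,j,k,\ell)$, and (b) $\overline{Q(j,i,\ell,k)}=Q(i,j,k,\ell)$. Both reduce to matching the five summands of $Q$ in pairs after permuting indices and renaming the dummy $r$, using only the antisymmetry of $T^c$. Granted (a), adding \eqref{eq:22} to its $j\leftrightarrow\ell$-translate cancels the two conjugate terms in pairs and yields the first claimed symmetry $T^j_{ik,\bar\ell}=-T^\ell_{ik,\bar j}$. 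Applying this symmetry to rewrite $\overline{T^k_{j\ell,\bar i}}=-\overline{T^i_{j\ell,\bar k}}$ in \eqref{eq:22} collapses it to the two-term identity
\[
T^j_{ik,\bar\ell}+2\,\overline{T^i_{j\ell,\bar k}}=Q(i,j,k,\ell).
\]
Relabelling $(i,j,k,\ell)\mapsto(j,i,\ell,k)$ in this displayed identity and conjugating produces the companion $2T^j_{ik,\bar\ell}+\overline{T^i_{j\ell,\bar k}}=\overline{Q(j,i,\ell,k)}$, whose right-hand side equals $Q(i,j,k,\ell)$ by (b). Subtracting yields the second claimed symmetry $T^j_{ik,\bar\ell}=\overline{T^i_{j\ell,\bar k}}$, and substituting back gives $T^j_{ik,\bar\ell}=\tfrac{1}{3}Q(i,j,k,\ell)=-\tfrac{2}{3}\sum_r(\cdots)$, as asserted.

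The only real obstacle is the two combinatorial checks (a) and (b). They are routine but demand careful sign-tracking: the pure quadratic summand $T^r_{ik}\overline{T^r_{j\ell}}$ changes sign by itself under the relevant permutation, while the other four summands of $Q$ reshuffle among themselves in two matched pairs, and the signs attached to them in \eqref{eq:22} must conspire correctly. Once these checks are in place, the remainder of the proof is pure linear algebra.
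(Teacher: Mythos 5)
Your proposal is correct and follows essentially the same route as the paper: the cyclic swap argument for $T^j_{ik,\ell}=0$ is identical, and your checks (a) and (b) are exactly the symmetries $P^{j\ell}_{\,ik}=-P^{\ell j}_{\,ik}=\overline{P^{ik}_{\,j\ell}}$ that the paper invokes (deferring the index algebra to \cite[Lemma 9]{WYZ}) to collapse the left side of \eqref{eq:22} to $3T^j_{ik,\overline{\ell}}$. Both checks do hold (I verified the sign-tracking), so your more explicit two-step derivation of $T^j_{ik,\overline{\ell}}=-T^{\ell}_{ik,\overline{j}}=\overline{T^i_{j\ell,\overline{k}}}$ is a valid spelling-out of the same argument.
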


\begin{proof}
From \eqref{eq:21} in Lemma \ref{lemma5}, we know that $T^j_{ik,\ell}$ satisfies $T^j_{ik,\ell} =T^j_{i\ell ,k}$. On the other hand, $T^j_{ik\ell} = - T^j_{ki\ell}$.  Thus
$$ T^j_{ik,\ell} = - T^j_{ki,\ell } = - T^j_{k\ell ,i} =  T^j_{\ell k,i} = T^j_{\ell i,k} = -T^j_{i\ell ,k} = -T^j_{ik,\ell},$$
so $T^j_{ik,\ell}=0$ for all indices. From \eqref{eq:22} in Lemma \ref{lemma5}, if we denote by $P^{j\ell}_{\,ik}$ the five term sigma on the right hand side, that is,
\begin{equation*}
P^{j\ell}_{\,ik} = \sum_r \big( T^r_{ik} \overline{T^r_{j\ell }} + T^j_{ir} \overline{T^k_{\ell r}} + T^{\ell}_{kr} \overline{T^i_{jr }} - T^{\ell}_{ir} \overline{T^k_{jr }} - T^j_{kr} \overline{T^i_{\ell r}} \big),
\end{equation*}
then clearly we have
$$ P^{j\ell}_{\,ik} = - P^{j\ell }_{\,ki} = - P^{\ell j}_{\,ik} = \overline{ P^{ik}_{j\ell } } .$$
By the same proof as in \cite[Lemma 9]{WYZ}, we see that $T^j_{ik ,\overline{\ell }} = - T^{\ell}_{ik ,\overline{j } }  =  \overline{  T^i_{j\ell  ,\overline{k }} }$, so the left hand side of the equality \eqref{eq:22} is $3$ times of $T^j_{ik ,\overline{\ell }}$. This completes the proof of Lemma \ref{lemma6}.
\end{proof}

Lemma \ref{lemma6} gives us nice properties for the Chern torsion components of Strominger K\"ahler-like manifolds just like in the Strominger flat case. However, unlike in the Strominger flat case, we no longer have $\nabla^s$-parallel frames any more, so the Bochner identity argument used in \cite{WYZ} breaks down here, and we have to dig in deeper into the algebraic tanglement of these torsion components and their $\nabla^s$ covariant derivatives. We will first prove the following

\begin{proposition}\label{prop1}
Given a Hermitian manifold $(M^n,g)$ whose $\nabla^s$ is K\"ahler-like,  then the metric $g$ is pluriclosed if and only if $\nabla^s T^c =0$.
\end{proposition}

\begin{proof}  Note that for any $t\in {\mathbb R}$, by definition, $\nabla^{(t)}=\nabla^c + t\gamma$, so the torsion tensor $T^{(t)}=T^c+ t\Gamma$, where $\Gamma (X,Y) = \gamma_XY - \gamma_YX$. Under any unitary frame $e$, the entries of the matrix for $\gamma$ are given by the components $T^j_{ik}$ of $T^c$, so if $\nabla^sT^c=0$, then $\nabla^sT^{(t)}=0$, and vice versa.


Now let us assume that $\nabla^s$ is K\"ahler-like. Then we have $T^j_{ik,\ell }=0$ and $T^j_{ik, \overline{\ell }} = -\frac{2}{3}P_{\,ik}^{j\ell }$ by  Lemma \ref{lemma6}. So $\nabla^sT^c=0$ means $T^j_{ik, \overline{\ell }} =0$, or equivalently, $P_{\,ik}^{j\ell }=0$ for any indices.


Again let us assume that the unitary frame $e$ has vanishing $\theta^s$ at the fixed point $p$. So at the point $p$ we have  $\partial \varphi = -\tau$ and $\overline{\partial} \varphi = -2 \overline{\gamma'}\varphi$. We compute
\begin{eqnarray*}
 \,^t\!\varphi \,\Phi \overline{\varphi}  & = &  \,^t\!\varphi \, ( \overline{\partial} \,\gamma' -  \partial \,\overline{^t\!\gamma'}  - 2\gamma' \,\overline{^t\!\gamma'} - 2 \,\overline{^t\!\gamma'} \gamma'  )  \overline{\varphi}  \\
& = & \sum_{i,j,k,\ell } \varphi_i \left\{ \overline{\partial} \gamma'_{ij} - \partial \overline{\gamma'_{ji}} - 2\sum_r \gamma'_{ir} \overline{\gamma'_{jr}} - 2\sum_r \overline{\gamma'_{ri}} \gamma'_{rj} \right\} \overline{\varphi}_j \\
& = & \sum _{i,j,k,\ell} \frac{1}{4} (Q_{ik}^{j\ell}-Q_{ki}^{j\ell} - Q_{ik}^{\ell j}  +Q_{ki}^{\ell j})\,\varphi_i \varphi_k \overline{\varphi}_j \overline{\varphi}_{\ell},
\end{eqnarray*}
where
\begin{eqnarray*}
Q_{ik}^{j \ell } & = & T_{ik,\overline{\ell}}^j -2 T^{j}_{ir} \overline{T^k_{r\ell }} + \overline{T^i_{j\ell,\overline{k}} } - 2 \overline{T^i_{jr} } T^{\ell }_{rk} + 2 T^r_{ik} \overline{ T^r_{j\ell } } -  2 \overline{T^i_{r\ell} } T^j_{rk}\\
& = & -\frac{4}{3} P_{\,ik}^{j\ell} + 2 T^r_{ik} \overline{ T^r_{j\ell } } + 2 T^{j}_{ir} \overline{T^k_{\ell r}}+ 2 T^{\ell}_{kr} \overline{T^i_{j r}} - 2 T^{j}_{kr} \overline{T^i_{\ell r}}.
\end{eqnarray*}
Here we used the fact that $\overline{ T^i_{j\ell ,\overline{k }} } = T^j_{ik,\overline{\ell}} = -\frac{2}{3}P^{j\ell}_{\,ik}$.  We have
\begin{eqnarray*}
\frac{1}{4} (Q_{ik}^{j\ell}-Q_{ki}^{j\ell} - Q_{ik}^{\ell j}  +Q_{ki}^{\ell j})
& = & - \frac{4}{3} P_{\,ik}^{j\ell} + 2 T^r_{ik} \overline{ T^r_{j\ell } } + \frac{3}{2} \left\{  T^{j}_{ir} \overline{T^k_{\ell r}}+  T^{\ell}_{kr} \overline{T^i_{j r}} -  T^{\ell}_{ir} \overline{T^k_{j r}} -  T^{j}_{kr} \overline{T^i_{\ell r}} \right\} \\
& = & - \frac{4}{3} P_{\,ik}^{j\ell} + 2 T^r_{ik} \overline{ T^r_{j\ell } } + \frac{3}{2} \left\{   P_{\,ik}^{j\ell } - T^r_{ik} \overline{ T^r_{j\ell } } \right\} \\
& = & \frac{1}{6} P_{\,ik}^{j\ell} + \frac{1}{2}T^r_{ik} \overline{ T^r_{j\ell } }.
\end{eqnarray*}
Therefore, we get
\begin{eqnarray*}
\sqrt{-1} \partial \overline{\partial }\,\omega & = & \,^t\!\tau \, \overline{\tau} + \,^t\! \varphi \,\Theta \overline{\varphi}  \ = \ \,^t\!\tau \, \overline{\tau}  - 2  \,^t\! \varphi \,\Phi \overline{\varphi}   \\
& = & \sum_{i,k,j,\ell} \left\{ (T^r_{ik}\overline{T^r_{j\ell}} - \frac{1}{3} P^{j\ell }_{\,ik} - T^r_{ik}\overline{T^r_{j\ell}} \right\}  \varphi_i \varphi_k \overline{\varphi}_j \overline{\varphi}_{\ell} \\
& = & -\frac{1}{3} \sum P^{j\ell }_{\,ik} \,\varphi_i \varphi_k \overline{\varphi}_j \overline{\varphi}_{\ell}.
\end{eqnarray*}
So the metric will be pluriclosed if and only if $P^{j\ell}_{\,ik} =0$, or equivalently, $\nabla^sT =0$. This completes the proof of Proposition \ref{prop1}.
\end{proof}


The main technical part in the proof of Theorem \ref{thm1} is to establish the following:

\begin{proposition}\label{prop2}
Given a Hermitian manifold $(M^n,g)$, if the  Strominger connection $\nabla^s$ is K\"ahler-like, then $\nabla^sT^c=0$, where $T^c$ is the torsion tensor of the Chern connection $\nabla^c$.
\end{proposition}

We will prove this proposition in the next section. Assuming Proposition \ref{prop2}, we are now ready to prove Theorem \ref{thm1}:

\begin{proof}[{\bf Proof of Theorem \ref{thm1} (assuming Proposition \ref{prop2})}] Let $(M^n,g)$  be a  Strominger K\"ahler-like manifold. By Proposition \ref{prop2}, we have $\nabla^sT^c=0$. Then by Proposition \ref{prop1}, we get $\partial \overline{\partial} \omega =0$. Conversely, suppose a Hermitian manifold $(M^n,g)$ is pluriclosed and has $\nabla^sT^c=0$. We want to show that it is Strominger K\"ahler-like, that is, $\,^t\!\varphi \,\Theta^s =0$.


Let us fix a point $p\in M$ and choose a local unitary frame $e$ near $p$ such that $\theta^s$ vanishes at $p$. At the point $p$, we have
$$\theta = -2\gamma , \ \ \partial \varphi = \, ^t\!\gamma' \varphi = -\tau , \ \ \overline{\partial }\varphi = -2\overline{\gamma'} \varphi, \ \   \partial{\tau}=2 ^t\!\gamma' \tau, \ \    \overline{\partial} \,^t\!\tau + 2 \,^t\!\tau \, \overline{ ^t\!\gamma'} = \,^t\!\varphi \, \Theta . $$
As in the proof of Lemma \ref{lemma5}, we have
$$ (\Theta^s)^{2,0} = 2\partial \gamma' + 4\gamma' \gamma' , \ \ \ (\Theta^s)^{1,1} = \Theta + 2\Phi . $$
So the Strominger K\"ahler-like condition means
\begin{equation}\label{eq:KL}
 \partial \gamma' + 2\gamma' \gamma'=0 \ \ \ \mbox{and} \ \ \ \overline{\partial} \,^t\!\tau + 2 \,^t\!\tau \, \overline{ ^t\!\gamma'} = -2 \,^t\!\varphi \, \Phi .
\end{equation}
If we write in components and use the parallelness of $T^c$, the first equality in (\ref{eq:KL}) becomes
$$ \sum_r \left\{ T^j_{ir} T^r_{k\ell } + T^j_{\ell r} T^r_{ik} + T^j_{kr} T^r_{\ell i} \right\} = 0,$$
which is true when $n\geq 3$ since $\partial{\tau}=2 ^t\!\gamma' \tau$, and it is automatically true when $n=2$. To see the second equality of (\ref{eq:KL}), use the fact $\,^t\!\tau = \,^t\!\varphi\,\gamma'$ and $\Phi = (d\gamma + 2\gamma \gamma )^{1,1}$,  we get
$$ \overline{\partial} \,^t\!\tau + 2 \,^t\!\tau \, \overline{ ^t\!\gamma'} + 2 \,^t\!\varphi \, \Phi    =     \, ^t\!\varphi \, ( \overline{\partial }\gamma' - 2 \partial \,\overline{^t\!\gamma'} -2 \gamma' \,\overline{^t\!\gamma'} - 2 \,\overline{^t\!\gamma'} \gamma' ) = \sum S_{ik\overline{\ell}} \,\varphi_i \varphi_k \overline{\varphi}_{\ell }, $$
where
$$ S_{ik\overline{\ell}} = - 2 T^j_{ir} \overline{T^k_{\ell r}} -4 T^{\ell }_{kr} \overline{T^i_{jr}} -2 T^r_{ik} \overline{T^r_{j\ell }} + 2 T^j_{kr} \overline{T^i_{\ell r}}$$
is a column vector whose $j$-th component is given by the right hand side of the above equation. It follows that
$$ \frac{1}{4}(S_{ki\overline{\ell}} - S_{ik\overline{\ell}}) = P_{\,ik}^{j\ell }, $$
thus the second equality of (\ref{eq:KL}) will hold when and only when $P_{\,ik}^{j\ell }=0$, so to prove Theorem \ref{thm1} it suffices to show $P=0$.


By assumption, we have $\nabla^sT^c=0$, and $\overline{\partial}\varphi = -2\overline{\gamma'} \varphi$, therefore
\begin{eqnarray*}
\overline{\partial} \,^t\!\tau \,\overline{\varphi} & = & \overline{\partial} \,(T^{\ell}_{ik} \varphi_i \varphi_k) \, \overline{\varphi}_{\ell}  \ = \ - 2T^{\ell }_{ir} \varphi_i \,\overline{\partial} \varphi_r  \,\overline{\varphi}_{\ell}  \ = \  4 T^{\ell}_{ir} \overline{ T^k_{jr}} \varphi_i\varphi_k \overline{\varphi}_{j} \overline{\varphi}_{\ell} \\
& = & \left\{ T^{\ell}_{ir} \overline{ T^k_{jr}} - T^{\ell}_{kr} \overline{ T^i_{jr}} - T^{j}_{ir} \overline{ T^k_{\ell r}} + T^{j}_{kr} \overline{ T^i_{\ell r}} \right\} \varphi_i\varphi_k \overline{\varphi}_{j} \overline{\varphi}_{\ell} \\
& = & ( T^{r}_{ik} \overline{ T^r_{j\ell }} - P_{\,ik}^{j\ell }) \, \varphi_i\varphi_k \overline{\varphi}_{j} \overline{\varphi}_{\ell}.
\end{eqnarray*}
This leads to the following
\begin{eqnarray*}
\sqrt{-1} \partial \overline{\partial }\, \omega & = & \,^t\!\tau \, \overline{\tau} + \,^t\! \varphi \,\Theta \overline{\varphi}  \ = \ \,^t\!\tau \, \overline{\tau}  + (  \overline{\partial} \,^t\!\tau + 2 \,^t\!\tau \,\overline{^t\!\gamma'} ) \overline{\varphi}   \\
& = &  \,^t\!\tau \, \overline{\tau} + \overline{\partial} \,^t\!\tau  \, \overline{\varphi} - 2\,^t\!\tau \, \overline{\tau} \ = \ \overline{\partial} \,^t\!\tau  \, \overline{\varphi} - \,^t\!\tau \, \overline{\tau}  \\
& = & - P_{\,ik}^{j\ell} \, \varphi_i\varphi_k \overline{\varphi}_{j} \overline{\varphi}_{\ell}.
\end{eqnarray*}
Since the metric is assumed to be pluriclosed, we have $P_{\,ik}^{j\ell }=0$. This completes the proof of Theorem \ref{thm1}, under the assumption that Proposition \ref{prop2} is already established.
\end{proof}

%
%

Our next goal is to prove Theorem \ref{thm2}. Under the coframe $\{\varphi , \overline{\varphi}\}$, the Riemannian (Levi-Civita) connection $\nabla$ has
$$ \nabla \left( \begin{array}{cc} \varphi \\ \overline{\varphi} \end{array} \right) = \, -  \left( \begin{array}{cc} \,^t\!\theta_1 &  \,^t\!\theta_2  \\ \overline{^t\!\theta}_2 &  \overline{^t\!\theta}_1   \end{array} \right)    \left( \begin{array}{cc} \varphi \\ \overline{\varphi} \end{array} \right). $$
Fix any $p\in M$, let us choose a local unitary frame $e$ in a neighborhood of $p$ so that $\theta^s=0$ at $p$. Then at the point $p$, we have $\theta_1=-\gamma$. Let us write $\theta_2 =\beta$, we have
$$
\nabla \varphi_i = - (\theta_1)_{ki} \,\varphi_k - (\theta_2)_{ki} \,\overline{\varphi}_k  \ = \ (\gamma_{ki} ) \varphi_k - ( \beta_{ki} ) \overline{\varphi}_k. $$
From this, we get that
\begin{eqnarray*}
\nabla \eta & = & ( \eta_{i,k } \varphi_{k}  + \eta_{ i, \overline{k} }  \overline{\varphi}_{k}  ) \varphi_i + ( \eta_{k} \gamma_{ik } ) \varphi_i  - ( \eta_{k} \beta_{ik } ) \overline{\varphi}_i,  \\
\nabla \overline{\eta } & = & ( \overline{\eta_{i,\overline{k} } }\, \varphi_{k}  + \overline{\eta_{ i, k }}\,  \overline{\varphi}_{k}  ) \overline{\varphi}_i - ( \overline{\eta}_{k} \overline{\beta}_{ik } ) \varphi_i  + ( \overline{\eta}_{k} \overline{\gamma}_{ik } ) \overline{\varphi}_i,
\end{eqnarray*}
where the index after the comma means covariant derivative with respect to $\nabla^s$. From these identities, we obtain the following

\begin{lemma}\label{lemma7}
On a Hermitian manifold $(M^n,g)$, the real $1$-form $\eta + \overline{\eta}$ is parallel under the Riemannian connection if and only if the following holds:
\begin{eqnarray*}
\eta_{i,k}  & = &  -\eta_r T^r_{ik} \\
\eta_{i,\overline{k}}  & = &  - \overline{\eta}_r T^{k}_{ir} - \eta_r \overline{T^i_{k r}} \label{eq:chi}
\end{eqnarray*}
for any $i$, $k$. Here the index after comma means covariant derivative with respect to $\nabla^s$.
\end{lemma}

In particular, when $n=2$, the right hand sides of the above two formula are always zero, so we get the following corollary:

\begin{lemma}\label{lemma8}
On a Hermitian surface $(M^2,g)$, the real $1$-form $\eta + \overline{\eta}$ is parallel under the Riemannian connection if and only if the torsion tensor $T^c$ is parallel under $\nabla^s$.
\end{lemma}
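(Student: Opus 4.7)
My plan is to reduce the statement to Lemma \ref{lemma7} and then exploit the fact that in complex dimension two, the torsion tensor $T^c$ and Gauduchon's torsion $1$-form $\eta$ carry exactly the same information.

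First, I would record the dimensional reduction. In $n=2$, antisymmetry in the lower indices leaves only the components $T^{1}_{12}$ and $T^{2}_{12}$ independent, and the definition $\eta_j = \sum_i T^{i}_{ij}$ gives
\[
T^{1}_{12} = \eta_2, \qquad T^{2}_{12} = -\eta_1.
\]
So the full tensor $T^c$ is determined by the two components of $\eta$, and conversely $\eta$ is a linear function of $T^c$.

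Next, I would plug this description into the two identities of Lemma \ref{lemma7} and check that each right hand side vanishes identically. For the first identity $\eta_{i,k} = -\eta_r T^r_{ik}$: since $T^r_{ik}$ is antisymmetric in $(i,k)$, only $(i,k)=(1,2)$ or $(2,1)$ contribute, and in each case the sum $\eta_1 T^1_{12} + \eta_2 T^2_{12} = \eta_1 \eta_2 + \eta_2(-\eta_1)= 0$. For the second identity $\eta_{i,\overline{k}} = \overline{\eta}_r T^{k}_{ir} - \eta_r \overline{T^{i}_{kr}}$, I would run through the four pairs $(i,k)\in\{1,2\}^2$; substituting $T^{1}_{12}=\eta_2$, $T^{2}_{12}=-\eta_1$ collapses each pair to a difference of the form $\overline{\eta}_a \eta_b - \eta_b \overline{\eta}_a = 0$. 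Hence Lemma \ref{lemma7} in dimension two reduces to the simple system
\[
\eta_{i,k}=0, \qquad \eta_{i,\overline{k}}=0 \quad \text{for all } i,k,
\]
that is, to $\nabla^s \eta = 0$.

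Finally, I would close the equivalence $\nabla^s \eta = 0 \Longleftrightarrow \nabla^s T^c = 0$ in dimension two. One direction is immediate: since $\eta$ is a contraction of $T^c$ and $\nabla^s$ commutes with contractions, $\nabla^s T^c = 0$ forces $\nabla^s \eta = 0$. The converse uses the explicit relations $T^{1}_{12}=\eta_2$ and $T^{2}_{12}=-\eta_1$, which give
\[
T^{1}_{12,\ell} = \eta_{2,\ell}, \quad T^{1}_{12,\overline{\ell}} = \eta_{2,\overline{\ell}}, \quad T^{2}_{12,\ell} = -\eta_{1,\ell}, \quad T^{2}_{12,\overline{\ell}} = -\eta_{1,\overline{\ell}},
\]
with the remaining components of $\nabla^s T^c$ obtained from these by antisymmetry; vanishing of $\nabla^s \eta$ thus forces vanishing of $\nabla^s T^c$. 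Combining the three steps yields the equivalence of conditions (iii) with $\nabla^s T^c = 0$.

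I do not expect serious technical obstacles here; the only thing to be careful about is the bookkeeping in step two, where one must verify the cancellation on the right hand sides of both identities of Lemma \ref{lemma7} for every index configuration, using both the antisymmetry of $T^{k}_{ij}$ in its lower indices and the specific substitution $T^{1}_{12}=\eta_2$, $T^{2}_{12}=-\eta_1$.
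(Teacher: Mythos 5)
Your proposal is correct and follows the same route as the paper: the paper obtains Lemma \ref{lemma8} as an immediate corollary of Lemma \ref{lemma7} by observing that when $n=2$ the right-hand sides of both formulas there vanish identically (using $T^1_{12}=\eta_2$, $T^2_{12}=-\eta_1$), so that parallelism of the Lee form reduces to $\nabla^s\eta=0$, which in dimension two is equivalent to $\nabla^sT^c=0$. You have merely written out the index verification that the paper leaves implicit, and your bookkeeping checks out.
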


Now we are ready to prove Theorem \ref{thm2} stated in the introduction:

\begin{proof}[{\bf Proof of Theorem \ref{thm2}}] Note that when $n=2$, the only component of $P$ is $ P^{12}_{\,12}$, which equals to $|T|^2-2|\eta|^2 $ and is always zero. So  for Hermitian surfaces, Strominger K\"ahler-like is equivalent to $\nabla^sT^c=0$, which is equivalent to Vaisman by the above Lemma. Such a surface is always locally conformally K\"ahler and pluriclosed. This completes the proof of Theorem \ref{thm2}.
\end{proof}

\vs

\section{The parallelness of the torsion}\label{pll_tor}

In this section, we will prove Proposition \ref{prop2}, the main technical result of this article. First let us focus on Gauduchon's torsion $1$-form $\eta$, which is defined by $\eta = \sum_i \eta_i \varphi_i$ where $\eta_i=\sum_{k} T^k_{ki}$. In the third equation of Lemma \ref{lemma6}, if we let $i=j$ and $k=\ell$ and sum them up from $1$ to $n$, we get
$$ \sum_i \eta_{i,\overline{i}}  = \frac{2}{3} \big( \sum_{i,j,k}|T^j_{ik}|^2 - 2 \sum_{i}|\eta_i|^2 ) \big)  = \frac{2}{3} \big( |T|^2 - 2|\eta|^2  \big) .$$
By (\ref{eq:structure}), we have
$ \overline{\partial } \eta = - \sum_{i,j=1}^n (\eta_{i,\overline{j}} + 2 \sum_p \eta_p \overline{T^i_{jp}}) \varphi_i\wedge \overline{\varphi_j}$, hence
$$  \sqrt{-1} \ \overline{\partial } \eta \wedge \omega^{n-1} = - \sum_i (\eta_{i,\overline{i}} + 2|\eta_i|^2) \frac{\omega^n}{n},$$
where $\omega$ is the K\"ahler form of the metric of $M^n$. On the other hand, by $(\ref{eq:domega})$, we have
$$ \partial \overline{\partial }\omega^{n-1} = 2 (\overline{\partial } \eta + 2 \eta \wedge \overline{\eta })\wedge \omega^{n-1}.$$
Combining the above  identities, we get the following:

\begin{lemma} \label{lemma9}
On a Strominger K\"ahler-like manifold  $(M^n,g)$, it holds that
\begin{equation*}
- \sqrt{-1} \  \partial \overline{\partial }\omega^{n-1} = \frac{2}{n} (\sum_i \eta_{i,\overline{i}}) \ \omega^n = \frac{4}{3n} ( |T|^2 - 2|\eta |^2 ) \ \omega^n.
\end{equation*}
In particular, when $M$ is compact, one has $ \int_M (|T|^2 -2|\eta|^2)\omega^n = 0$, so the metric $g$ cannot be balanced unless it is K\"ahler.
\end{lemma}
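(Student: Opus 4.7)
The plan is to establish the first equality by combining the two displayed identities immediately preceding the lemma, and then to convert the trace $\sum_i \eta_{i,\overline{i}}$ into the torsion-norm expression by applying the Strominger K\"ahler-like identity $T^j_{ik,\overline{\ell}} = -\frac{2}{3}P^{j\ell}_{\,ik}$ from Lemma \ref{lemma6}. Everything takes place in a local unitary coframe, which is already the setup of the preceding computations.

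First I would recall the standard wedge identity $\sqrt{-1}\,\varphi_i \wedge \overline{\varphi}_j \wedge \omega^{n-1} = \delta_{ij}\,\omega^n/n$, which yields $\sqrt{-1}\,\eta\wedge\overline{\eta}\wedge\omega^{n-1} = |\eta|^2\,\omega^n/n$. Inserting this, together with the formula for $\sqrt{-1}\,\overline{\partial}\eta\wedge\omega^{n-1}$ displayed just before the lemma, into $\partial\overline{\partial}\omega^{n-1} = 2(\overline{\partial}\eta + 2\eta\wedge\overline{\eta})\wedge\omega^{n-1}$, one sees that the $4|\eta|^2/n$ contribution from the cross term $2\eta\wedge\overline{\eta}$ cancels exactly against the $4|\eta|^2/n$ term produced by $\overline{\partial}\eta\wedge\omega^{n-1}$, leaving the first equality $-\sqrt{-1}\,\partial\overline{\partial}\omega^{n-1} = \frac{2}{n}\bigl(\sum_i \eta_{i,\overline{i}}\bigr)\omega^n$.

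Second, to compute $\sum_i \eta_{i,\overline{i}}$, I would take the $k$-trace of the identity from Lemma \ref{lemma6}. Since $\eta_i = \sum_k T^k_{ki}$ and covariant differentiation commutes with this trace in a unitary frame, one obtains $\eta_{i,\overline{j}} = -\frac{2}{3}\sum_k P^{kj}_{\,ki}$. Setting $j=i$ and summing over $i$, one expands the five terms in the definition of $P$: the three self-pairings of Chern-torsion components each contribute $\pm|T|^2$ (with signs $+,-,-$), while the two remaining cross terms, each of the form $\sum_{k} T^k_{kr}\overline{T^i_{ir}}$ or its conjugate, collapse into $|\eta|^2$ via the definition $\eta_r = \sum_k T^k_{kr}$. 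The resulting arithmetic gives $\sum_{i,k}P^{ki}_{\,ki} = 2|\eta|^2 - |T|^2$, hence $\sum_i \eta_{i,\overline{i}} = \frac{2}{3}(|T|^2 - 2|\eta|^2)$, which supplies the second equality.

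The two consequences follow immediately. If $g$ is balanced, then $\partial\omega^{n-1}=0$ forces $\eta=0$ via (\ref{eq:domega}), and the displayed identity then reads $0 = \frac{4}{3n}|T|^2\,\omega^n$ pointwise, so $T^c$ vanishes and $g$ is K\"ahler. When $M$ is compact, integrating both sides and applying Stokes's theorem to the exact left-hand side yields $\int_M(|T|^2 - 2|\eta|^2)\omega^n = 0$. The only real obstacle in this argument is the index bookkeeping in the second step: after relabelling one must verify that exactly three of the five terms in $P^{ki}_{\,ki}$ contribute $|T|^2$-type sums with the correct signs and that the remaining two recombine into $|\eta|^2$ via the trace defining $\eta$. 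The rest of the proof is routine manipulation.
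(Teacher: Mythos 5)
Your proposal is correct and follows essentially the same route as the paper: the first equality comes from combining the two displayed identities preceding the lemma (with the $|\eta|^2$ terms cancelling exactly as you describe), and the second comes from tracing the identity $T^j_{ik,\overline{\ell}}=-\tfrac{2}{3}P^{j\ell}_{\,ik}$ of Lemma \ref{lemma6}, which is precisely the computation the paper records as the last identity of \eqref{eq:eta_1} in Lemma \ref{lemma10}. Your index bookkeeping for $\sum_{i,k}P^{ki}_{\,ki}=2|\eta|^2-|T|^2$ and the two stated consequences are all verified correctly.
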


From now on we will denote by $|\eta|^2=\sum_i |\eta_i|^2$ and $|T|^2=\sum_{i,j,k} |T^j_{ik}|^2$ under any unitary frame. Note that under the frame $\{ e, \overline{e}\}$, the torsion tensor $T^c$ of the Chern connection takes the form
$$ T^c(e_i, e_j) = 2\sum_k T^k_{ij} e_k, \ \ \ \ T^c(e_i, \overline{e_j})= 0, \ \ \ \ T^c(\overline{e_i}, \overline{e_j})=2\sum_k \overline{T^k_{ij}} \overline{e_k}, $$
so $|\!|T^c|\!|^2= 8\sum_{i,j,k} |T^k_{ij}|^2 = 8|T|^2$. When $n=2$, the torsion tensor has only two components:
$T^1_{12}$ and $T^2_{12}$, while the Gauduchon $1$-form has coefficients
$\eta_1 = -T^2_{12}$ and $\eta_2 = T^1_{12}$, so we always have
$|T|^2=2|\eta|^2$ when $n=2$. That is, a Strominger K\"ahler-like surface is always pluriclosed.

\vsv

Next, let us introduce the following notations:
\begin{equation*}
 A_{k\overline{\ell }} = \sum_{r,s} T^r_{sk} \overline{ T^r_{s\ell } } , \ \ \ \     B_{k\overline{\ell }} = \sum_{r,s} T^{\ell }_{rs} \overline{ T^k_{rs } } , \ \ \ \   C_{ik} = \sum_{r,s} T^r_{si} T^s_{rk},  \ \ \ \   \phi^{\ell }_k = \sum_r \overline{\eta}_r T^{\ell }_{kr}.
 \end{equation*}
Clearly, $C$ is symmetric, while $A$, $B$ are Hermitian symmetric. By taking trace of the identities in Lemma \ref{lemma6}, we get the following:

 \begin{lemma}\label{lemma10}
 Let $(M^n,g)$ be a Hermitian manifold that is Strominger K\"ahler-like. Then
 \begin{eqnarray}
 && \eta_{i,k}=0, \ \ \ \ \ \ \ \ \sum_r \eta_r T^r_{ik} = 0, \ \ \ \ \  \sum_r \eta_{r, \overline{r}} = \frac{2}{3} ( |T|^2 - 2 |\eta |^2), \label{eq:eta_1}\\
&& \eta_{k, \overline{\ell }} \ = \  \overline{\eta_{\ell , \overline{k }} }\ = \ - \frac{2}{3} S_{k\overline{\ell }} \ :=  \ - \frac{2}{3} ( \phi_k^{\ell} + \overline{   \phi^k_{\ell }  }   -  B_{k\overline{\ell }} ) . \nonumber
 \end{eqnarray}
 for any $i$, $k$, $\ell$, where the index after comma means covariant derivative in $\nabla^s$.
 \end{lemma}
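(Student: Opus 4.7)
The plan is to obtain every assertion in Lemma \ref{lemma10} by contracting one of the formulae from Lemma \ref{lemma6} against the trace $\eta_i = \sum_j T^j_{ji}$. Since $\nabla^s$-differentiation commutes with index contraction, we have $\eta_{i,k} = \sum_j T^j_{ji,k}$ and $\eta_{i,\bar\ell} = \sum_j T^j_{ji,\bar\ell}$, so each identity in Lemma \ref{lemma10} should fall out of the corresponding trace in Lemma \ref{lemma6}.

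The first identity $\eta_{i,k}=0$ is immediate, since $T^j_{ji,k}=0$ for every $j,i,k$ by Lemma \ref{lemma6}. For $\sum_r \eta_r T^r_{ik} = 0$, I would set $j=\ell$ in the cubic identity \eqref{eq:25} and sum over $\ell$: using $\sum_\ell T^\ell_{r\ell} = -\eta_r$, the first of the three cyclic terms contributes $-\sum_r \eta_r T^r_{ik}$, while the remaining two cubic sums turn into $C_{ik}$ and $-C_{ik}$ after relabeling dummy indices and using $T^r_{ab}=-T^r_{ba}$ together with the symmetry $C_{ik}=C_{ki}$, hence cancel.

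For the mixed-derivative formula I would start from
\begin{equation*}
\eta_{k,\bar\ell} \;=\; \sum_j T^j_{jk,\bar\ell} \;=\; -\frac{2}{3}\sum_j P^{j\ell}_{\,jk}
\end{equation*}
via Lemma \ref{lemma6}, and expand the five pieces of $\sum_j P^{j\ell}_{\,jk}$. After consistent relabeling, two of the pieces become $A_{k\bar\ell}$ and $-A_{k\bar\ell}$ (via the skew-symmetry of the $T$'s) and cancel; the two pieces containing the trace $\sum_j T^j_{js}=\eta_s$ produce $\phi^\ell_k$ and $\overline{\phi^k_\ell}$; and the remaining piece equals $-B_{k\bar\ell}$. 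Collecting terms yields $\sum_j P^{j\ell}_{\,jk} = S_{k\bar\ell}$, hence the stated formula. The equality $\eta_{k,\bar\ell}=\overline{\eta_{\ell,\bar k}}$ then follows from the Hermitian symmetry $\overline{S_{\ell\bar k}}=S_{k\bar\ell}$, which is visible from the definitions of $\phi$ and $B$. Finally, the identity for $\sum_r \eta_{r,\bar r}$ arises by setting $k=\ell$ in the preceding formula and summing, using $\sum_k\phi^k_k = |\eta|^2$ and $\sum_k B_{k\bar k}=|T|^2$ to get $\sum_k S_{k\bar k} = 2|\eta|^2 - |T|^2$.

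I expect the main obstacle to be the bookkeeping inside $\sum_j P^{j\ell}_{\,jk}$: recognizing each of the five triple sums as $\pm A_{k\bar\ell}$, $-B_{k\bar\ell}$, $\phi^\ell_k$, or $\overline{\phi^k_\ell}$ requires a consistent choice of dummy names together with repeated use of the antisymmetries $T^r_{ab}=-T^r_{ba}$ and $P^{j\ell}_{\,ik} = -P^{\ell j}_{\,ik} = -P^{j\ell}_{\,ki}$. Apart from this, the proof is a direct trace of Lemma \ref{lemma6}.
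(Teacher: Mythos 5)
Your proposal is correct and follows exactly the paper's route: the paper proves Lemma \ref{lemma10} by "taking trace of the identities in Lemma \ref{lemma6}," and your contractions (including the identification $\sum_j P^{j\ell}_{\,jk}=S_{k\overline{\ell}}$, which the paper also notes explicitly) check out. The bookkeeping you flag as the main obstacle works as you describe, with the two $A_{k\overline{\ell}}$-type terms cancelling and the trace terms producing $\phi^{\ell}_k$, $\overline{\phi^k_{\ell}}$, and $-B_{k\overline{\ell}}$.
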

Note that the quantity $S_{k\overline{\ell}}$ above is simply $\sum_i P^{i\ell }_{\,ik}$. Next let us derive some commutativity formula. Fix a point $p\in M$, and let $e$ be a local unitary frame such that $\theta^s$ vanishes at $p$. Since $\nabla e_i = \nabla^se_i - \gamma e_i + (\overline{\theta_2})_{ij}e_j$, at $p$ we have
\begin{equation*}\label{eq:liebracket}
[e_k, e_j]\ = \  \nabla_{e_k} e_j - \nabla_{e_j}e_k \ = \  - \gamma_{e_k}e_j + \gamma_{e_j} e_k \ = \ 2 \sum_r T^r_{kj} e_r.
\end{equation*}
Again at the point $p$, we compute
\begin{eqnarray*}
&& \eta_{i, \overline{j}}  \ = \ \overline{e}_j (\eta_i) - \sum_r \eta_r \langle \nabla^s_{\overline{e}_j} e_i , \overline{e}_r \rangle, \\
&& \eta_{i, \overline{j}\,\overline{k} }  \ = \ \overline{e}_k (\eta_{i, \overline{j}} ) \ = \ \overline{e}_k  (\overline{e}_j (\eta_i)) -  \sum_r \eta_r \langle \nabla^s_{ \overline{e}_k} \nabla^s_{\overline{e}_j} e_i , \overline{e}_r \rangle, \\
&& \eta_{i, \overline{j}\,\overline{k} } - \eta_{i, \,\overline{k}\overline{j} } \ = \ [ \overline{e}_k, \overline{e}_j ] \,\eta_i - \sum_r \eta_r \langle R^s_{\overline{e}_k \overline{e}_j} e_i, \overline{e}_r \rangle.
\end{eqnarray*}
The curvature term is $\Theta^s_{ir}( \overline{e}_k, \overline{e}_j)$, which equals to $0$ since $(\Theta^s)^{0,2}=0$, so we get the following
\begin{equation*}\label{eq:commute}
\eta_{i, \overline{j}\,\overline{k} } - \eta_{i, \,\overline{k}\overline{j} } \  =  \    2 \sum_r \overline{T^{r}_{kj}} \, \eta_{i, \overline{r}}.
\end{equation*}

\begin{lemma} \label{lemma11}
On a Strominger K\"ahler-like manifold $(M^n,g)$, the equality  $\sum_k \eta_{k, \overline{\ell}} \, \overline{\eta}_k =0$ holds for any index $\ell$. In particular, $|\eta |^2$ is a constant.
\end{lemma}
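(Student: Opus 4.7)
The plan is to substitute the expression for $\eta_{k,\bar\ell}$ from Lemma \ref{lemma10} directly into the sum $\sum_k \eta_{k,\bar\ell}\bar\eta_k$ and observe that each of the three resulting terms collapses, after one relabelling of summation indices, to a contraction that has already been shown to vanish. Concretely, I would start from
\[
\eta_{k,\bar\ell} \;=\; -\tfrac{2}{3}\bigl( \phi_k^{\ell} + \overline{\phi_\ell^{k}} - B_{k\bar\ell}\bigr),
\]
multiply by $\bar\eta_k$, and sum on $k$, so that it suffices to show $\sum_k \phi_k^\ell\bar\eta_k=0$, $\sum_k \overline{\phi_\ell^k}\bar\eta_k=0$, and $\sum_k B_{k\bar\ell}\bar\eta_k=0$ separately.

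For the first contraction, $\sum_k \phi_k^\ell\bar\eta_k = \sum_{k,r}\bar\eta_r\bar\eta_k T^\ell_{kr}$, which is zero because the symmetric tensor $\bar\eta_k\bar\eta_r$ is paired against the antisymmetric tensor $T^\ell_{kr}=-T^\ell_{rk}$. For the other two contractions, the key observation is that the second identity of \eqref{eq:eta_1} gives $\sum_k \eta_k T^k_{rs}=0$ for any $r,s$. Conjugating, $\sum_k \bar\eta_k\overline{T^k_{rs}}=0$. Hence
\[
\sum_k \overline{\phi_\ell^k}\bar\eta_k \;=\; \sum_{r}\eta_r\Bigl(\sum_k \bar\eta_k\overline{T^k_{\ell r}}\Bigr)\;=\;0,
\qquad
\sum_k B_{k\bar\ell}\bar\eta_k \;=\; \sum_{r,s} T^\ell_{rs}\Bigl(\sum_k \bar\eta_k \overline{T^k_{rs}}\Bigr)\;=\;0.
\]
Combining these three vanishings gives $\sum_k \eta_{k,\bar\ell}\bar\eta_k = 0$, which is the first assertion of Lemma \ref{lemma11}.

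For the second assertion, I would compute $\nabla^s(|\eta|^2)$ componentwise. Since $|\eta|^2=\sum_k \eta_k\bar\eta_k$ is a function, $e_i(|\eta|^2) = \sum_k \eta_{k,i}\bar\eta_k + \sum_k \eta_k \overline{\eta_{k,\bar i}}$. The first sum vanishes by the identity $\eta_{i,k}=0$ from Lemma \ref{lemma10} (relabelling), and the second sum is precisely the complex conjugate of the vanishing contraction just established with $\ell=i$. Conjugating yields $\bar e_i(|\eta|^2)=0$ as well, so $|\eta|^2$ is annihilated by every tangent vector, hence constant.

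I do not anticipate a genuine obstacle here: the entire argument is a bookkeeping exercise that exploits three facts already available, namely the antisymmetry $T^\ell_{kr}=-T^\ell_{rk}$, the algebraic trace identity $\sum_r \eta_r T^r_{ik}=0$, and the vanishing of the $(1,0)$-covariant derivative $\eta_{i,k}$. The only subtlety is to make sure the indices are renamed correctly so that each contraction is recognised as one of the two known-zero quantities; once that is done, the statement about $|\eta|^2$ follows immediately.
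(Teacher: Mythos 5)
Your proof is correct and follows essentially the same route as the paper: substitute the formula for $\eta_{k,\overline{\ell}}$ from Lemma \ref{lemma10}, kill the $\phi$-term by the antisymmetry $T^{\ell}_{kr}=-T^{\ell}_{rk}$, kill the $\phi^{\ast}$- and $B$-terms by (the conjugate of) $\sum_r \eta_r T^r_{ik}=0$, and then combine with $\eta_{i,k}=0$ to conclude that $|\eta|^2$ has vanishing derivatives. No gaps.
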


\begin{proof}
By Lemma \ref{lemma10}, we have
\begin{equation*}
-\frac{3}{2} \sum_k \eta_{k, \overline{\ell}} \, \overline{\eta}_k  = \sum_{r,k} \overline{\eta}_r \overline{\eta}_k T^{\ell }_{kr} + \sum_{r,k} \eta_r \overline{ T^k_{\ell r} \eta_k } - \sum_{r,s,k} T^{\ell }_{rs} \overline{ T^k_{rs} \eta_k } = 0,
\end{equation*}
since $T^{\ell }_{kr} = - T^{\ell}_{rk}$ and the second equality of \eqref{eq:eta_1} hold. This together with $\eta_{k,\ell }=0$ implies that $|\eta |^2_{,\overline{\ell} }=0$ for any $\ell$, hence $|\eta |^2$ is a constant.
\end{proof}

From now on, we will use the Einstein convention on indices, namely, any index appearing twice is summed up from $1$ to $n$. By taking the covariant derivative in $\ell$ for the identity in Lemma \ref{lemma11} and summing it up, we get
\begin{equation}\label{sum}
|\eta_{k, \overline{\ell}} |^2 + \eta_{k, \overline{\ell} \ell } \, \overline{\eta}_k = 0.
\end{equation}
For the first term, we have
\begin{equation}\label{square}
\frac{9}{4} |\eta_{k, \overline{\ell}} |^2  = | \phi + \phi^{\ast} - B|^2 = |B|^2 + |\phi +\phi^{\ast }|^2 - 2 (\phi B + \overline{\phi B}),
\end{equation}
where $\phi^{\ast}= \,^t\!\overline{\phi}$ and $\phi B = \sum_{k,\ell } \phi^{\ell }_k B_{\ell \overline{k}}$. By the commutativity formula (\ref{eq:commute}), it follows that
\begin{eqnarray*}
\eta_{k, \overline{\ell} \ell } & = & ( \overline{   \eta_{\ell , \overline{k} } } )_{\!,\ell }  \ = \ \overline{   \eta_{\ell , \overline{k} \,\overline{\ell}  }       }     \ = \ \overline{   \eta_{\ell , \overline{\ell} \,\overline{k}  }  - 2 \overline{T^r_{k\ell }}\, \eta_{\ell, \overline{r}}       } \\
& = & \frac{2}{3}( |T|^2 - 2|\eta |^2)_{\!,k} - 2 T^r_{k\ell } \, \eta_{r, \overline{\ell }} \ = \ \frac{2}{3} |T|^2_{,k} - 2 T^r_{k\ell } \, \eta_{r, \overline{\ell }}   \ \ \ \ \ \\
& = & \frac{2}{3} T^i_{jr} \overline{ T^i_{jr, \overline{k}} } - 2 \,T^r_{k\ell } \,\eta_{r, \overline{\ell} } \ = \ \frac{2}{3} T^i_{jr} \, T^j_{ik,\overline{r}} - 2 \,T^r_{k\ell } \,\eta_{r, \overline{\ell} }.
\end{eqnarray*}
Now use the formula for $\nabla^s$-covariant derivatives of the torsion, the second term in (\ref{sum}) becomes
\begin{eqnarray*}
\frac{9}{4} \eta_{k, \overline{\ell} \ell }\overline{\eta}_k  & = &  -  T^i_{jr} \overline{\eta}_k ( T^s_{ik} \overline{T^s_{jr} } +  T^r_{ks} \overline{T^i_{js} } - T^j_{ks} \overline{T^i_{rs} }  )   +3  T^r_{k\ell } \overline{\eta}_k ( \phi_r^{\ell} + \overline{ \phi^r_{\ell } } - B_{r\overline{\ell}}   )  \nonumber \\
& = & - \phi^s_i B_{s\overline{i}} + \phi^r_sA_{r\overline{s}} + \phi^j_sA_{j\overline{s}} - 3 \phi^r_{\ell} ( \phi_r^{\ell} + \overline{ \phi^r_{\ell } } - B_{r\overline{\ell}}   ) \nonumber \\
& = & 2 \phi B + 2\phi A - 3 |\phi |^2 - 3 \phi^i_k \phi^k_i.
\end{eqnarray*}
For simplicity, let us denote the last term by $\phi \cdot \phi$. We have
\begin{equation}\label{second}
\frac{9}{4} Re\, (\eta_{k, \overline{\ell} \ell }\overline{\eta}_k ) = 2 Re\, (\phi B) + 2 Re\, (\phi A) - \frac{3}{2} |\phi +\phi^{\ast }|^2,
\end{equation}
where we have used the fact that
$$ |\phi + \phi^{\ast} |^2 = (\phi^i_k + \overline{ \phi^k_i} ) ( \overline{ \phi^i_k  } +\phi^k_i )  = 2|\phi|^2 + 2Re\, ( \phi \cdot \phi ).$$
Plugging (\ref{square}) and (\ref{second}) into (\ref{sum}), we obtain
\begin{equation}\label{sumzero}
|B|^2 + 2 Re\, (\phi A) - 2Re\, (\phi B) - \frac{1}{2} |\phi +\phi^{\ast }|^2 =0.
\end{equation}

Now let us focus  on the terms $\phi A$ and $\phi B$. We start from the identity $ \eta_r T^r_{ik}=0$ in Lemma \ref{lemma10}. Taking covariant derivative in $\overline{\ell}$, we get
$$ (\phi^{\ell}_r + \overline{\phi^r_{\ell} } - B_{r \overline{\ell}})T^r_{ik} + \eta_r (   T^s_{ik}\overline{T^s_{r\ell } } - T^{\ell}_{is}\overline{T^k_{rs } } + T^{\ell}_{ks}\overline{T^i_{rs } }  )=0,$$
or equivalently,
\begin{equation}\label{central}
 (\phi^{\ell}_r  - B_{r \overline{\ell}} )T^r_{ik} +  T^{\ell }_{is} \overline{ \phi^k_s} - T^{\ell }_{ks} \overline{ \phi^i_s} =0.
 \end{equation}
Multiplying the above by $\overline{\eta}_k$ and summing up $k$, we get
$$ \phi_i^r B_{r\overline{\ell}} = \phi_i^r\phi_r^{\ell} + \phi_s^{\ell } \overline{\phi_s^i}.$$
Let $\ell =i$ and sum up, we get
\begin{equation*}\label{B}
\phi B = \phi \cdot \phi + |\phi |^2.
\end{equation*}
Taking the real parts, we get
\begin{equation}\label{Breal}
Re\, (\phi B )= Re\, (\phi \cdot \phi )+ |\phi |^2 = \frac{1}{2} |\phi + \phi^{\ast }|^2.
\end{equation}
Now if we multiply on (\ref{central}) by $\overline{T^{\ell }_{ik}}$ and sum up all indices, it yields that
\begin{equation*}\label{BA}
\phi B -|B|^2 + 2 \overline{\phi A} = 0,
\end{equation*}
and by taking the real part, we obtain
\begin{equation*}\label{BAreal}
Re\,(\phi B) -|B|^2 + 2 Re\, (\phi A) = 0.
\end{equation*}
Subtracting that from (\ref{sumzero}), we get
$$ 2|B|^2 - 3 Re\, (\phi B) - \frac{1}{2}|\phi + \phi^{\ast }|^2 = 0,$$
and compare this last equality with (\ref{Breal}), we get
\begin{equation*}\label{etafinal}
|B|^2 = 2 Re\,(\phi B)=  |\phi + \phi^{\ast }|^2 \ \ \ \mbox{and} \ \ \ 2 Re\, (\phi A) = Re\, (\phi B).
\end{equation*}
Plug them into (\ref{square}), we see that $\eta_{k,\overline{\ell}}=0$ for any $k$, $\ell$. Thus we have proved the following:

\begin{lemma} \label{lemma12}
If the Hermitian manifold $(M^n,g)$ is Strominger K\"ahler-like, then its torsion $1$-form $\eta$ is parallel in $\nabla^s$.
\end{lemma}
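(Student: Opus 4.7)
The target is to prove $\eta_{k,\overline{\ell}}=0$ for all $k,\ell$; combined with $\eta_{i,k}=0$ from Lemma~\ref{lemma10}, this yields $\nabla^s\eta=0$. My plan is to exploit two independent sources of quadratic identities on the torsion: \textbf{(a)} differentiating the two known scalar ``constants'' --- namely $|\eta|^2$ from Lemma~\ref{lemma11} and $\sum_\ell\eta_{\ell,\overline\ell}=\tfrac{2}{3}(|T|^2-2|\eta|^2)$ from Lemma~\ref{lemma10} --- combined with the commutation formula \eqref{eq:commute} for $\nabla^s$; and \textbf{(b)} differentiating the pointwise algebraic constraint $\eta_rT^r_{ik}=0$ in $\overline\ell$. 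Since the formula $\eta_{k,\overline{\ell}}=-\tfrac{2}{3}S_{k\overline{\ell}}$ with $S_{k\overline{\ell}}=\phi^\ell_k+\overline{\phi^k_\ell}-B_{k\overline{\ell}}$ is already established, it suffices to prove $S\equiv 0$ pointwise.

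For step \textbf{(a)}, I would first differentiate the identity $\sum_k\eta_{k,\overline\ell}\overline\eta_k=0$ along $e_\ell$ and sum, producing the scalar relation $|\eta_{k,\overline\ell}|^2+\eta_{k,\overline\ell\ell}\overline\eta_k=0$. Expanding the first term via Lemma~\ref{lemma10} gives $\tfrac{4}{9}(|B|^2+|\phi+\phi^{\ast}|^2-2\operatorname{Re}(\phi B))$. For the mixed third derivative I would use the conjugate symmetry $\eta_{k,\overline\ell}=\overline{\eta_{\ell,\overline k}}$ to swap derivatives, then apply the commutation formula \eqref{eq:commute}; the resulting $\eta_{\ell,\overline\ell\overline k}$-term is differentiated from the scalar identity above. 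Because $|\eta|^2$ is constant (Lemma~\ref{lemma11}), only $\tfrac{2}{3}|T|^2_{,k}$ survives, and one rewrites $T^i_{jr,\overline k}$ through the tensor $P$ by Lemma~\ref{lemma6}. This produces one real identity among $|B|^2$, $\operatorname{Re}(\phi A)$, $\operatorname{Re}(\phi B)$, $|\phi+\phi^{\ast}|^2$.

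For step \textbf{(b)}, differentiating $\eta_rT^r_{ik}=0$ in $\overline\ell$ and using Lemma~\ref{lemma6} together with the expression for $\eta_{r,\overline\ell}$ yields the key tensor identity
\begin{equation*}
(\phi^\ell_r-B_{r\overline\ell})T^r_{ik}+T^\ell_{is}\overline{\phi^k_s}-T^\ell_{ks}\overline{\phi^i_s}=0.
\end{equation*}
Two natural contractions extract scalar consequences: multiplying by $\overline\eta_k$ and tracing over $(i,\ell)$ yields $\phi B=\phi\cdot\phi+|\phi|^2$, hence $2\operatorname{Re}(\phi B)=|\phi+\phi^{\ast}|^2$; multiplying instead by $\overline{T^\ell_{ik}}$ and summing yields $\phi B-|B|^2+2\overline{\phi A}=0$, whose real part is $\operatorname{Re}(\phi B)-|B|^2+2\operatorname{Re}(\phi A)=0$. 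These provide two further real identities on the same four scalars.

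The hard part is not any individual calculation but the bookkeeping: one is shuffling roughly a dozen quartic torsion contractions through the symmetries provided by Lemma~\ref{lemma6} (antisymmetry of $T^j_{ik}$ in lower indices and the identities $P^{j\ell}_{ik}=-P^{\ell j}_{ik}=\overline{P^{ik}_{j\ell}}$). Once the three real identities from steps (a) and (b) are assembled, the system is overdetermined and forces $|B|^2=|\phi+\phi^{\ast}|^2=2\operatorname{Re}(\phi B)$; substituted back into the expansion of $|S|^2$, this gives $|S|^2\equiv 0$ pointwise, whence $\eta_{k,\overline\ell}\equiv 0$ and $\nabla^s\eta=0$.
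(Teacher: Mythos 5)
Your outline reproduces the paper's argument essentially step for step: the same relation $|\eta_{k,\overline{\ell}}|^2+\eta_{k,\overline{\ell}\ell}\overline{\eta}_k=0$ obtained from the constancy of $|\eta|^2$ together with the commutation formula, the same identity $(\phi^{\ell}_r-B_{r\overline{\ell}})T^r_{ik}+T^{\ell}_{is}\overline{\phi^k_s}-T^{\ell}_{ks}\overline{\phi^i_s}=0$ from differentiating $\eta_rT^r_{ik}=0$, the same two contractions of it, and the same final assembly forcing $|B|^2=|\phi+\phi^{\ast}|^2=2\,\mathrm{Re}\,(\phi B)$. One small arithmetic slip: the cross term in $|\phi+\phi^{\ast}-B|^2$ is $-2(\phi B+\overline{\phi B})=-4\,\mathrm{Re}\,(\phi B)$, not $-2\,\mathrm{Re}\,(\phi B)$; with the correct coefficient the back-substitution does give $|S|^2\equiv 0$ as you claim.
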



Our next goal is to show that, under the Strominger K\"ahler-like assumption, the torsion tensor $T^c$ will also be parallel with respect to the Strominger connection $\nabla^s$, which will complete the proof of Proposition \ref{prop2}.


Let $(M^n,g)$ be a Hermitian manifold which is Strominger K\"ahler-like. From our earlier discussion, we already established that $\eta_{k, \overline{\ell }}=0$ holds for all indices. In particular, $|T|^2=2|\eta|^2$ is a constant. It is also known that $B=\phi + \phi ^{\ast}$.

\begin{lemma} \label{lemma13}
On any Strominger K\"ahler-like manifold $(M^n,g)$, the tensors $\phi$ and $B$ are parallel with respect to $\nabla^s$.
\end{lemma}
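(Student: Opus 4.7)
The plan is to establish $\nabla^s \phi = 0$ directly from the defining relation $\phi^\ell_k = \bar\eta_r T^\ell_{kr}$, and then to deduce $\nabla^s B = 0$ from the identity $B = \phi + \phi^{\ast}$ (which is nothing but $S_{k\bar\ell}=0$, now available because $\eta_{k,\bar\ell} = -\tfrac{2}{3} S_{k\bar\ell}$ has already been shown to vanish). Since $\nabla^s$ is Hermitian, conjugate transposition of a $(1,1)$-tensor commutes with $\nabla^s$, so once $\nabla^s \phi = 0$ is proved one immediately gets $\nabla^s \phi^{\ast} = 0$ and hence $\nabla^s B = \nabla^s \phi + \nabla^s \phi^{\ast} = 0$.

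For the $(1,0)$-derivative, the parallelness of $\eta$ combined with $T^\ell_{kr,j}=0$ (Lemma \ref{lemma6}) yields $\phi^\ell_{k,j}=0$ at once. The real work lies in the $(0,1)$-derivative: using $T^\ell_{kr,\bar j} = -\tfrac{2}{3} P^{\ell j}_{kr}$ from Lemma \ref{lemma6}, one has $\phi^\ell_{k,\bar j} = -\tfrac{2}{3}\,\bar\eta_r P^{\ell j}_{kr}$. When one expands the five summands of $P^{\ell j}_{kr}$ and contracts against $\bar\eta_r$, two of them carry a factor of the form $\bar\eta_r \overline{T^r_{\ast\cdot}}$ and vanish by the conjugate of $\eta_r T^r_{ik}=0$ from Lemma \ref{lemma10}. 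Using the definition of $\phi$ and the skew-symmetry of $T^\ell_{rs}$ in its lower indices on the remaining three terms reduces the desired vanishing $\phi^\ell_{k,\bar j}=0$ to the cubic identity
\[
\phi^s_k \,\overline{T^s_{\ell j}} \;-\; \phi^j_s \,\overline{T^k_{\ell s}} \;+\; \phi^\ell_s \,\overline{T^k_{j s}} \;=\; 0.
\]

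To verify this identity I would substitute $B_{r\bar\ell} = \phi^\ell_r + \overline{\phi^r_\ell}$ into the central relation \eqref{central}, which collapses it into a pure cubic identity in $\phi$, $T$ and their conjugates; taking complex conjugate and relabelling indices via $(\ell,i,k) \mapsto (k,j,\ell)$ produces precisely the displayed identity. The main — and in fact the only — obstacle is to recognise that \eqref{central} paired with the already-established $B = \phi + \phi^{\ast}$ furnishes exactly the algebraic relation needed to cancel the three surviving cubic terms; everything else is index bookkeeping. Together with the opening paragraph this gives $\nabla^s \phi = 0$, and hence $\nabla^s B = 0$, completing the plan.
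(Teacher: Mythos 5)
Your argument is correct, but it reaches the conclusion by a genuinely different (and longer) route than the paper's. The paper disposes of $\phi^{\ell}_{k,\overline{j}} = \sum_r \overline{\eta}_r\, T^{\ell}_{kr,\overline{j}}$ in one stroke: by the symmetries of Lemma \ref{lemma6} one has $T^{\ell}_{kr,\overline{j}} = -\,\overline{T^r_{\ell j,\overline{k}}}$, so the sum equals $-\,\overline{\sum_r \eta_r T^r_{\ell j,\overline{k}}}$, which vanishes upon differentiating the identity $\sum_r \eta_r T^r_{\ell j}=0$ of Lemma \ref{lemma10} in $\overline{k}$ and using the already established parallelness of $\eta$. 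You instead substitute the explicit five-term expression $T^{\ell}_{kr,\overline{j}} = -\frac{2}{3}P^{\ell j}_{\,kr}$, kill two of the five summands via the conjugate of $\sum_r \eta_r T^r_{js}=0$, and reduce the remaining three to the cubic identity $\phi^s_k\overline{T^s_{\ell j}} - \phi^j_s\overline{T^k_{\ell s}} + \phi^{\ell}_s\overline{T^k_{js}}=0$. I have checked the index bookkeeping: the contraction of $\overline{\eta}_r$ against $P^{\ell j}_{\,kr}$ does produce exactly that expression, and \eqref{central} with $\phi^{\ell}_r - B_{r\overline{\ell}} = -\overline{\phi^r_{\ell}}$ (available since $\eta_{k,\overline{\ell}} = -\frac{2}{3}S_{k\overline{\ell}}$ now vanishes, i.e.\ $B=\phi+\phi^{\ast}$), after conjugation and the relabelling $(\ell,i,k)\mapsto(k,j,\ell)$ together with the skew-symmetry $\overline{T^r_{j\ell}}=-\overline{T^r_{\ell j}}$, is precisely that identity. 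Both proofs ultimately rest on differentiating $\eta_r T^r_{ik}=0$ once $\nabla^s\eta=0$ is known; yours trades the paper's direct use of the derivative symmetries for an extra algebraic computation, which costs more writing but makes explicit that \eqref{central} is exactly the relation doing the work. The $(1,0)$-derivative and the passage from $\phi$ to $B=\phi+\phi^{\ast}$ are handled correctly.
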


\begin{proof}
By Lemma \ref{lemma6}, $T^j_{ik,l}=0$, so $\phi^j_{i,k}=0$, and it suffices to show that $\phi^j_{i, \overline{\ell }} =0$ for all indices, or equivalently, $\sum_k \overline{\eta}_k T^j_{ik, \overline{\ell }}=0$. Since $T^j_{ ik, \overline{\ell } } = - \overline{ T^k_{j\ell , \overline{i}}  } $, and $\sum_k \eta_k T^k_{j\ell} =0$, we see that $\phi$, hence $B$, is parallel in $\nabla^s$.
\end{proof}

We will also need the the following commutativity formula for $T$. Since $R^s_{\bar{k}\bar{\ell} \ast \ast }=0$, we have
$$  T^i_{j\ell , \overline{k} \,\overline{\ell }} \ = \ T^i_{j\ell , \overline{k} \,\overline{\ell }} - T^i_{j\ell , \overline{\ell } \,\overline{k} } \ = \ [\overline{e}_{\ell} , \overline{e}_k] \,T^i_{j\ell } \ = \ -2 \,\overline{T^p_{k\ell } } \, T^i_{j\ell , \overline{p}}\,.$$
The first equality holds because $T^i_{j\ell, \overline{\ell}} = - T^{\ell }_{j\ell , \overline{i}} =  \eta_{j, \overline{i}} =0$. Here and below we always use the Einstein convection for indices, namely, any repeated index is summed up. From the above, we get
\begin{equation}\label{Tdoubleprime}
\overline{T^j_{ik}} T^j_{ik, \overline{\ell} \ell } \ = \  \overline{T^j_{ik}} \overline{T^i_{j\ell , \overline{k} \, \overline {\ell }}}    \ = \   -2 \overline{T^j_{ik}} T^p_{k\ell } \overline{ T^i_{j\ell , \overline{p}}  } \ = \   -2 \overline{T^j_{ik}} T^p_{k\ell } T^j_{ip, \overline{\ell} } \ = \ -2 T^p_{k\ell} A_{p\overline{k}, \overline{\ell}}.
\end{equation}
We will use this to deduce the following

\begin{lemma}\label{lemma14}
For a Strominger K\"ahler-like manifold $(M^n,g)$, the torsion tensor $T$ will be parallel in $\nabla^s$ if the following holds:
\begin{equation*}\label{Aparallel}
\sum_{p,k,\ell} T^p_{k\ell} A_{p\overline{k}, \overline{\ell}} =0.
\end{equation*}
\end{lemma}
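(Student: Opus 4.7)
The approach leverages two facts already in place: the constancy of $|T|^2$ (which, combined with the parallelness of $\eta$ via Lemma~\ref{lemma9} and Lemma~\ref{lemma11}, yields $|T|^2 = 2|\eta|^2 = \text{const}$), and the identity (\ref{Tdoubleprime}). Together with the hypothesis (\ref{Aparallel}), these are combined in a short Bochner-style computation to force the non-negative quantity $\sum |T^j_{ik,\overline{\ell}}|^2$ to vanish. Once it does, Lemma~\ref{lemma6} (which already supplies $T^j_{ik,\ell}=0$) completes the proof of $\nabla^s T^c = 0$.

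In detail, first I compute $\nabla^s_{\overline{e}_\ell}|T|^2$ in a unitary frame. Since $\nabla^s$ is metric-compatible and Hermitian,
\[
\nabla^s_{\overline{e}_\ell}|T|^2 \;=\; \sum_{i,j,k}\bigl(T^j_{ik,\overline{\ell}}\,\overline{T^j_{ik}} + T^j_{ik}\,\overline{T^j_{ik,\ell}}\bigr).
\]
The second term vanishes identically by $T^j_{ik,\ell}=0$, and the left-hand side vanishes because $|T|^2$ is a constant function. Hence the globally defined scalar function $\sum_{i,j,k}T^j_{ik,\overline{\ell}}\,\overline{T^j_{ik}}$ is identically zero on $M$ for each fixed index $\ell$.

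Second, I apply $\nabla^s_{e_\ell}$ to this identity, sum over $\ell$, and use the Hermitian relation $\nabla^s_{e_\ell}\overline{T^j_{ik}} = \overline{T^j_{ik,\overline{\ell}}}$. Leibniz then gives
\[
0 \;=\; \sum_{i,j,k,\ell}\Bigl(T^j_{ik,\overline{\ell}\ell}\,\overline{T^j_{ik}} + |T^j_{ik,\overline{\ell}}|^2\Bigr).
\]
By (\ref{Tdoubleprime}) the first sum equals $-2\sum_{p,k,\ell}T^p_{k\ell}\,A_{p\overline{k},\overline{\ell}}$, which vanishes by the hypothesis (\ref{Aparallel}). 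Therefore $\sum|T^j_{ik,\overline{\ell}}|^2 = 0$, so every $T^j_{ik,\overline{\ell}}$ vanishes, and combined with $T^j_{ik,\ell}=0$ this yields $\nabla^s T^c = 0$ at an arbitrary point of $M$.

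No substantive obstacle is expected, since all the heavy lifting has been done earlier in deriving (\ref{Tdoubleprime}), the parallelness of $\eta$, and Lemma~\ref{lemma6}. The only points requiring a little care are the correct bookkeeping of complex conjugates under the Hermitian covariant derivative (namely $\nabla^s_{e_\ell}\overline{T^j_{ik}} = \overline{T^j_{ik,\overline{\ell}}}$) and the observation that $T^j_{ik,\ell}=0$ holds as a tensor identity on $M$, so that on applying Leibniz no extra terms involving $T^j_{ik,\ell\overline{\ell}}$ survive.
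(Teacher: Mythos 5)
Your argument is correct and is essentially the paper's own proof: differentiate the constant $|T|^2$ in $\overline{\ell}$ to get $\overline{T^j_{ik}}\,T^j_{ik,\overline{\ell}}=0$, differentiate again in $\ell$ and sum, then use (\ref{Tdoubleprime}) together with the hypothesis (\ref{Aparallel}) to kill the second-order term and conclude $\sum|T^j_{ik,\overline{\ell}}|^2=0$. Your extra remarks on the conjugate bookkeeping and on $T^j_{ik,\ell}=0$ only make explicit what the paper leaves implicit.
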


\begin{proof}
Since $|T|^2$ is a constant, by taking derivative in $\overline{\ell}$, we get
$$ \overline{T^j_{ik} } \, T^j_{ik, \overline{\ell }} =0.$$
Taking covariant derivative in $\ell $ again and summing up $\ell$, we have
\begin{equation*}\label{sum2}
|T^j_{ik, \overline{\ell}} |^2 + \overline{T^j_{ik} } \,T^j_{ik, \overline{\ell } \ell } = 0.
\end{equation*}
By (\ref{Tdoubleprime}), the second term on the left is equal to $-2 \sum_{p,k,\ell} T^p_{k\ell} A_{p\overline{k}, \overline{\ell}}$, whose vanishing would imply  the vanishing of the square term, which means $\nabla^sT^c=0$, so the lemma is proved.
\end{proof}

Now we are finally ready to prove Proposition  \ref{prop2}.

\begin{proof}[{\bf Proof of Proposition \ref{prop2}}]
For any index $k$ we have
$$ 0 = |T|^2_{,\overline{k}} = \overline{T^i_{j\ell }} \, T^i_{j\ell ,\overline{k}} = \overline{T^i_{j\ell }} \, \overline{T^j_{ik, \overline{\ell}} } \ . $$
Taking conjugate and using the derivative formula in Lemma \ref{lemma6}, we get
\begin{eqnarray*}
0 & = & -\frac{3}{2} T^i_{j\ell } \, T^j_{ik, \overline{\ell}}  \ = \ T^i_{j\ell }  P_{\,ik}^{j\ell } \\
& = & T^i_{j\ell } \left\{  T^r_{ik}\, \overline{T^r_{j\ell }}  + T^j_{ir}\, \overline{T^k_{\ell r}} +  T^{\ell}_{kr}\, \overline{T^i_{j r}}  -T^{\ell}_{ir}\, \overline{T^k_{j r}}  -  T^j_{kr}\, \overline{T^i_{\ell r}}  \right\} \\
& = & B_{r\overline{i}} T^r_{ik} + C_{\ell r} \overline{ T^k_{\ell r}} +A_{\ell \overline{r}} T^{\ell}_{kr} + C_{jr}\overline{T^k_{jr} } + A_{j\overline{r}} T^j_{kr} \\
& = & B_{r\overline{i}} T^r_{ik} + 2 A_{j\overline{r}} T^j_{kr},
\end{eqnarray*}
where the last equality is due to the fact that $C_{jr} = C_{rj}$ while $T^k_{jr} = -T^k_{rj}$. Taking the derivative in $\overline{k}$ and summing up $k$, since $B$ is parallel, and $T^a_{bk, \overline{k}} = - T^k_{bk, \overline{a}} = \eta_{b, \overline{a}} =0$, we get
$$ A_{j\overline{r}, \overline{k}} T^j_{kr} = 0.$$
By Lemma \ref{lemma14}, this implies  that $\nabla^sT=0$, so we have completed the proof of Proposition \ref{prop2}.
\end{proof}


\vs

\section{The proof of Theorem \ref{thm3}}

In this section, we will discuss some general properties for compact Strominger K\"ahler-like manifolds, and prove Theorem \ref{thm3} stated in the introduction. We begin with the following
\begin{lemma}\label{lemma15}
On a Strominger K\"ahler-like manifold $(M^n,g)$, it holds that
\begin{equation}
\partial \eta =0 , \ \ \ \ \ \overline{\partial} \eta = -2 \sum_{i,j} \overline{\phi^i_j} \, \varphi_i \,\overline{\varphi}_j
\end{equation}
where $\varphi$ is any unitary coframe.
\end{lemma}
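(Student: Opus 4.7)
The plan is to compute $\partial\eta$ and $\overline{\partial}\eta$ pointwise by choosing, at each $p\in M$, a local unitary frame $e$ for which $\theta^s(p)=0$. Such a frame exists by Lemma \ref{lemma3}, and at the point $p$ the structure equations simplify to $\partial\varphi = -\tau$ and $\overline{\partial}\varphi = -2\,\overline{\gamma'}\,\varphi$, as already used in Lemma \ref{lemma5}. Since covariant differentiation in $\nabla^s$ at $p$ coincides with frame differentiation of coefficients, any statement that a tensor is $\nabla^s$-parallel translates directly into the vanishing of the corresponding coefficient derivatives at $p$.

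For the first identity, write $\eta = \sum_i \eta_i \varphi_i$ and expand
\[
\partial\eta \;=\; \sum_i (\partial\eta_i)\wedge\varphi_i \;+\; \sum_i \eta_i\,\partial\varphi_i
\]
at the point $p$. The first sum vanishes because its coefficients are exactly $\eta_{i,k}$, which are zero by Lemma \ref{lemma10}. In the second sum, $\partial\varphi_i = -\tau_i$ at $p$, so the sum is proportional to $\sum_i \eta_i T^i_{jk}$, which is zero again by Lemma \ref{lemma10} (the identity $\eta_r T^r_{ik}=0$). Thus $\partial\eta|_p=0$, and since $p$ was arbitrary, $\partial\eta=0$ globally.

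For the second identity, the analogous expansion at $p$ is
\[
\overline{\partial}\eta \;=\; \sum_i (\overline{\partial}\eta_i)\wedge\varphi_i \;+\; \sum_i \eta_i\,\overline{\partial}\varphi_i.
\]
The first sum has coefficients $\eta_{i,\overline{j}}$, which we have just shown vanish (the $\nabla^s$-parallelness of $\eta$ established above). For the second sum, I would plug in $\overline{\partial}\varphi_i = -2\sum_j \overline{\gamma'_{ij}}\,\varphi_j$ together with $\gamma'_{ij}=\sum_k T^j_{ik}\varphi_k$, getting
\[
\sum_i \eta_i\,\overline{\partial}\varphi_i \;=\; 2\sum_{i,j,k}\eta_i\,\overline{T^j_{ik}}\,\varphi_j\wedge\overline{\varphi}_k,
\]
then relabel indices and use the skew-symmetry $T^j_{rk}=-T^j_{kr}$ to recognize the coefficient as $-2\,\overline{\phi^i_j}$, since by definition $\phi^\ell_k=\sum_r \overline{\eta}_r T^\ell_{kr}$. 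This gives $\overline{\partial}\eta = -2\sum_{i,j}\overline{\phi^i_j}\,\varphi_i\wedge\overline{\varphi}_j$ at $p$, and hence globally.

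There is no real obstacle here: the lemma is essentially a direct consequence of three facts already in hand (vanishing of $\eta_{i,k}$, vanishing of $\sum_r \eta_r T^r_{ik}$, and the $\nabla^s$-parallelness of $\eta$), combined with the simplified structure equations at a point where $\theta^s$ vanishes. The only care required is in bookkeeping the indices and the sign from the skew-symmetry of $T^j_{ik}$ when converting the sum $\sum_i \eta_i\,\overline{T^j_{ik}}$ into $\overline{\phi^i_j}$.
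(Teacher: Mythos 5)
Your proof is correct, and it takes a slightly different route from the paper's. You work at an arbitrary point $p$ in a frame with $\theta^s(p)=0$ and apply the Leibniz rule directly to $\eta=\sum_i\eta_i\varphi_i$, reducing everything to the three facts $\eta_{i,k}=0$, $\sum_r\eta_rT^r_{ik}=0$, and $\eta_{i,\overline{j}}=0$; your index bookkeeping for the second identity (using $T^j_{rk}=-T^j_{kr}$ to produce $-2\,\overline{\phi^i_j}$) checks out against the paper's conventions $\gamma'_{ij}=\sum_kT^j_{ik}\varphi_k$ and $\phi^{\ell}_k=\sum_r\overline{\eta}_rT^{\ell}_{kr}$. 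The paper instead exploits the $\nabla^s$-parallel vector field $X=\sum_i\overline{\eta}_ie_i$: it chooses a unitary frame with $e_n=X/|X|$, so that $\eta=\lambda\varphi_n$ with $\lambda$ constant and the $n$-th column of $\theta^s$ vanishes identically, whence $\partial\eta=\lambda\,\partial\varphi_n$ and $\overline{\partial}\eta=\lambda\,\overline{\partial}\varphi_n$ are read off from the structure equations in that adapted frame. The two arguments use exactly the same inputs; yours is marginally cleaner in that it needs no case split for $g$ K\"ahler (the paper must first dispose of the case $\eta=0$ since its adapted frame requires $|\eta|>0$), while the paper's adapted frame is reused elsewhere and makes the global nature of $X$ explicit.
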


\begin{proof}
 When the metric $g$ is K\"ahler, we have $T=0$, $\eta=0$, and $\phi=0$, so the above identities hold. Let us now  assume that $g$ is not K\"ahler. Under any local unitary frame $e$ and dual coframe $\varphi$, let us write $\eta = \sum_i \eta_i \varphi_i$. We observe  that $X = \sum_i \overline{\eta}_i e_i$ is a globally defined vector field, with $|X|^2=\sum_i |\eta_i|^2 = |\eta |^2$ being a positive constant since $g$ is not K\"ahler. Clearly, $\nabla^s X=0$.

Let us choose our local unitary frame $e$ so that $e_n=\frac{X}{|X|}$.  Since $\nabla^s e_n=0$, the matrix of connection $\nabla^s$ under $e$ takes the form
$$ \theta^s = \left[ \begin{array}{cc} \ast  & 0 \\ 0 & 0 \end{array} \right], $$
where $\ast $ is the $(n-1)\times (n-1)$ block. Also, under this frame $e$, we have $\eta_1=\cdots = \eta_{n-1}=0$ and $\eta_n=\lambda $, where $\lambda =|\eta|>0$  is a constant. By the structure equation, $\partial \varphi = 2 \,^t\!\gamma' \varphi - \,^t\!(\theta^s)' \varphi  + \tau$. Since $\theta^s_{jn}=0$ and $T^n_{\ast \ast }=0$ by the second equality of \eqref{eq:eta_1}, we have
$$ \partial \eta = \lambda \partial \varphi_n = \lambda  (2 \gamma'_{jn} \varphi_j + \tau_n)=0.$$
Similarly, by the structure equation $\overline{\partial }\varphi = \overline{\theta'}\varphi = \overline{( (\theta^s)' \!- \!2\gamma' )} \,\varphi $, we get
$$ \overline{\partial } \eta = \lambda \overline{\partial }\varphi_n = -2\lambda\, \overline{T^i_{nj}} \,\overline{\varphi}_{j} \varphi_i  = -2 \overline{ \phi^i_j} \,\varphi_i \overline{\varphi}_j .$$
This completes the proof of the lemma.
\end{proof}


\begin{proof}[{\bf Proof of Theorem \ref{thm3}}] Let $(M^n,g)$ be a compact Hermitian manifold which is Strominger K\"ahler-like. Assume that $g$ is not K\"ahler. We want to show that $M$ does not admit any balanced metric.

By the lemma above, we have an expression of $\overline{\partial} \eta$ in terms of the tensor $\phi$. Since $B=\phi +\phi^{\ast }$ is Hermitian, we may rotate our unitary frame $e$ to assume that $B$ is diagonal: $B_{i\overline{j}} = b_i\delta_{ij}$, where $b_i = \sum_{j,k} |T^i_{jk}|^2 \geq 0$. Let $\psi = \phi - \frac{1}{2}B$, then $\psi + \psi^{\ast }=0$ and we have
$$ \overline{\partial} \eta = -\sum_i b_i \varphi_i \overline{\varphi}_i - 2 \sum_{i,j} \overline{\psi^i_j} \varphi_i \overline{\varphi}_j. $$
Now suppose that $g_0$ is a balanced metric on $M^n$. Locally under the $g$-unitary frame $e$, we may write
$$ \omega_0^{n-1} = \sum_{i,j} H_{i\overline{j}} \, \widehat{ \varphi_i \overline{\varphi}_j},  $$
where $H$ is a positive definite Hermitian matrix and
$$ \sqrt{-1} \varphi_i \overline{\varphi}_j \wedge \widehat{ \varphi_k \overline{\varphi}_{\ell} } = \delta_{ik} \delta_{j\ell }\,\omega^n. $$
This gives us
$$ \sqrt{-1} \, \overline{\partial} \eta \, \omega_0^{n-1} = \left( -\sum_i b_i H_{i\overline{i}} - 2 \sum_{i,j} \overline{\psi^i_j}  H_{i\overline{j}} \right) \omega^n = (-x - 2 y)\,\omega^n. $$
Note that $\sigma := \sqrt{-1}\sum_{i,j} B_{i\bar{j}} \varphi_i \wedge \overline{\varphi}_j$ is a globally defined non-negative $(1,1)$-form on $M$, and $\sigma \wedge \omega_0^{n-1}= x \omega^n$, so $x$ hence $y$ is globally defined.
Since $\psi$ is skew-Hermitian, we have
$$ y = tr(H \overline{\psi}) = tr (\,^t\! (H\overline{\psi} )) = tr (\psi^{\ast }\,^t\! H) = - tr(\psi \overline{H}) = - tr (\overline{H} \psi ) = -\overline{y},$$
which implies that $y$ is pure imaginary, while $x$ is clearly real and nonnegative. Since $g_0$ is balanced, $d(\omega_0^{n-1})=0$, so $\, \overline{\partial}\eta \,\omega_0^{n-1}$ is exact, whose integral over $M^n$ is zero. By taking its real part, we know that the integral of $x$ over $M$ is zero, which forces $x$ to be identically zero. This leads to $b_i=0$ for each $i$, or equivalently, $T=0$, which contradicts with the assumption that $g$ is not K\"ahler. Thus we have completed the proof of Theorem \ref{thm3}.
\end{proof}

Recall that a Hermitian manifold $(M^n,g_0)$ is called {\em strongly Gauduchon,} if $\partial \omega_0^{n-1}$ is $\overline{\partial}$-exact, where $\omega_0$ is the K\"ahler form of $g_0$. This condition was introduced by Popovici \cite{Popovici}, and has been studied extensively in deformation and modification stability problems.

In Theorem \ref{thm3}, if the metric $g_0$ is only assumed to be strongly Gauduchon, then the same argument works. To be more precise, we have the following

\begin{theorem}\label{thm4}
If a compact Hermitian manifold $(M^n,g)$ is Strominger K\"ahler-like and $g$ is not K\"ahler, then $M^n$ does not admit any strongly Gauduchon metric.
\end{theorem}

\begin{proof}
Assume in the contrary that $M^n$ admits a  strongly Gauduchon metric $g_0$, then by definition there exists a $(2n-2)$-form $\Omega$ such that $\overline{\partial }\omega_0^{n-1}=\partial \Omega$. So we have
$$ \int_M \overline{\partial }\eta \wedge \omega_0^{n-1} = \int_M \eta \wedge \overline{\partial }\omega_0^{n-1} =\int_M \eta \wedge \partial \Omega = \int_M \partial \eta \wedge \Omega =0,$$
since $\partial \eta =0$ by Lemma \ref{lemma15}. Therefore the same proof of Theorem \ref{thm3} will go through in this case.
\end{proof}

\begin{remark}
Note that pluriclosed metric and strongly Gauduchon metric could co-exist on a compact non-K\"ahlerian manifold. In \cite{OUV}, A. Otal, L. Ugarte, R. Villacampa established existence of compact non-K\"ahler manifolds admitting a
Hermitian metric that satisfies both the pluriclosed and the strongly Gauduchon conditions. So in the above theorem, the parallelness of torsion has played a key role.
\end{remark}

Furthermore, Strominger K\"ahler-like metrics are necessarily Gauduchon, namely

\begin{proposition}\label{prop3}
Let $(M^n,g)$ be a Hermitian manifold that is Strominger K\"ahler-like. Then the metric is Gauduchon in the sense that $\partial \overline{\partial} (\omega^{n-1}) =0$.
\end{proposition}

\begin{proof}
Since we already proved that the torsion tensor is parallel under the Strominger connection $\nabla^s$, by Lemma \ref{lemma10}, we see that
$2|\eta |^2 -|T|^2=0$. Hence by Lemma \ref{lemma9}, we get $\partial \overline{\partial} (\omega^{n-1}) =0$. This means that $g$ is always Gauduchon.
\end{proof}

\begin{remark}
Combining this with the pluriclosedness, we know that when $n\geq 3$, any Strominger K\"ahler-like manifold $(M^n,g)$ always satisfies the following pointwise identity:
\begin{equation*}
\partial \omega \wedge \overline{\partial } \omega \wedge  \omega^{n-3} =0.
\end{equation*}
\end{remark}

The above equality along with the pluriclosedness of $g$ immediately give us  the following
\begin{equation*}
\partial  \overline{\partial }( \omega^{k}) \wedge  \omega^{n-k-1} =0 , \ \ \ \ \ \forall \ 1\leq k\leq n-1
\end{equation*}
That is, any Strominger K\"ahler-like metric $g$ is {\em $k$-Gauduchon} for any $k$, a condition introduced by Fu-Wang-Wu in \cite{FuWangWu}. Another distinctive property about Strominger K\"ahler-like manifolds is the following:

\begin{proposition} \label{prop4}
If $(M^n,g)$ is a compact Strominger K\"ahler-like manifold and $g$ is not K\"ahler, then the Dolbeault cohomology group $H^{0,1}_{\overline{\partial }} (M)\neq 0$.
\end{proposition}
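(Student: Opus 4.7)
The plan is to show that the class $[\overline{\eta}]$ is nonzero in $H^{0,1}_{\overline{\partial}}(M)$. Note first that $\overline{\eta}$ is $\overline{\partial}$-closed: by Lemma \ref{lemma15} we have $\partial\eta=0$, and conjugation yields $\overline{\partial}\,\overline{\eta}=0$. To establish nontriviality I will appeal to Serre duality
$$H^{0,1}_{\overline{\partial}}(M)\;\cong\;H^{n,n-1}_{\overline{\partial}}(M)^{\ast}$$
given by the nondegenerate pairing $([\alpha],[\beta])\mapsto\int_M\alpha\wedge\beta$, and exhibit a specific $\overline{\partial}$-closed $(n,n-1)$-form that pairs nontrivially with $\overline{\eta}$.

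The natural candidate is $\beta:=\eta\wedge\omega^{n-1}$. The first step is to verify that $\overline{\partial}\beta=0$. Using the conjugate of \eqref{eq:domega}, namely $\overline{\partial}\omega^{n-1}=-2\,\overline{\eta}\wedge\omega^{n-1}$, one obtains
$$\overline{\partial}\beta\;=\;\overline{\partial}\eta\wedge\omega^{n-1}\;+\;2\,\eta\wedge\overline{\eta}\wedge\omega^{n-1}.$$
By Theorem \ref{BK_Gau} the metric is Gauduchon, so $\partial\overline{\partial}\omega^{n-1}=0$; applying $\partial$ to $\overline{\partial}\omega^{n-1}=-2\,\overline{\eta}\wedge\omega^{n-1}$ gives $\partial\overline{\eta}\wedge\omega^{n-1}=2\,\eta\wedge\overline{\eta}\wedge\omega^{n-1}$, and conjugating yields $\overline{\partial}\eta\wedge\omega^{n-1}=-2\,\eta\wedge\overline{\eta}\wedge\omega^{n-1}$. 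Substituting shows $\overline{\partial}\beta=0$.

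The second step is the pairing computation. The standard unitary-frame identity $\sqrt{-1}\,\eta\wedge\overline{\eta}\wedge\omega^{n-1}=\tfrac{|\eta|^2}{n}\,\omega^n$, together with the fact that $\nabla^s\eta=0$ (established in Section \ref{pll_tor}, which makes $|\eta|^2$ a constant), gives
$$\int_M \overline{\eta}\wedge\beta\;=\;-\!\int_M \eta\wedge\overline{\eta}\wedge\omega^{n-1}\;=\;\frac{\sqrt{-1}\,|\eta|^2}{n}\,\mathrm{Vol}(M).$$
This is nonzero, because $|\eta|^2=0$ would force $\eta\equiv 0$, whence $g$ would be balanced and hence K\"ahler by Theorem \ref{BK_balanced}, contrary to hypothesis. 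Equivalently and more directly, if one assumes $\overline{\eta}=\overline{\partial}f$ for some smooth $f$, then since $\overline{\partial}\beta=0$ and $f\beta$ has bidegree $(n,n-1)$ one has $\overline{\partial}(f\beta)=d(f\beta)$, and Stokes' theorem forces $\int_M\overline{\eta}\wedge\beta=\int_M\overline{\partial}(f\beta)=0$, contradicting the computation above. Therefore $[\overline{\eta}]\neq 0$.

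The only genuine technical step is the identity $\overline{\partial}(\eta\wedge\omega^{n-1})=0$, which rests on the Gauduchon identity $\partial\overline{\partial}\omega^{n-1}=0$ of Theorem \ref{BK_Gau}; once this is in hand, the Serre duality pairing is a routine two-line computation, and the proposition follows.
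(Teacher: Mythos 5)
Your argument is correct and is essentially the paper's proof in a different packaging: both reduce to pairing $\overline{\eta}$ against $\omega^{n-1}$ (equivalently, against $\eta\wedge\omega^{n-1}$), using Lemma \ref{lemma15} to get $\overline{\partial}\,\overline{\eta}=0$, the Gauduchon identity $\partial\overline{\partial}\omega^{n-1}=0$ to make the pairing descend to cohomology, and the positivity of the constant $|\eta|^2$ to conclude nonvanishing. The paper simply integrates $\partial\overline{\partial}f\wedge\omega^{n-1}$ by parts directly instead of first verifying $\overline{\partial}(\eta\wedge\omega^{n-1})=0$ and invoking Serre duality, but the inputs and the mechanism are the same.
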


\begin{proof}
This is because by Lemma \ref{lemma15}, $\overline{\partial } \overline{\eta } =0$, and if $\overline{\eta} = \overline{\partial }f$ for some smooth function $f$ on $M^n$, then
$$ \int_M \partial \overline{\eta} \wedge \omega^{n-1} = \int_M \partial \overline{\partial }f \wedge \omega^{n-1} =0$$
as $g$ is Gauduchon. This leads to the vanishing of the torsion, hence $g$ must be K\"ahler, a contradiction.
\end{proof}

\vsv
\vsv

\noindent\textbf{Acknowledgments.} The first named author is grateful to the Mathematics Department of Ohio State University for the nice research environment and the warm hospitality during his stay. The second named author would like to thank his collaborators Gabriel Khan, Qingsong Wang and Bo Yang for their previous joint works, which laid the foundation for the computation carried out in the present paper. We are also very grateful to the referee for the exceptionally long list of suggestions for improvement and typo corrections, which enhanced the readability of the paper.

\vs

\end{document}